\newtheorem{fact}[theorem]{Fact}
\numberwithin{equation}{section}
\numberwithin{theorem}{section}
\numberwithin{problem}{section}
\numberwithin{lemma}{section}
\numberwithin{corollary}{section}
\numberwithin{remark}{section}
\numberwithin{example}{section}
\numberwithin{proposition}{section}
\let\c@proposition\c@theorem
\let\c@corollary\c@theorem
\let\c@remark\c@theorem
\let\c@problem\c@theorem
\let\c@lemma\c@theorem
\let\c@definition\c@theorem
\let\c@example\c@theorem
\newcommand{\ra}[1]{\renewcommand{\arraystretch}{#1}}
\newcommand{\N}{\mathbb N}
\newcommand{\R}{\mathbb R}
\newcommand{\der}{\mbox{${\rm\,d}$}}
\newcommand{\dt}{\mbox{${\rm\,d}t$}}
\begin{document}

%%%%%%%%%%
\title{Dual polynomial spline bases}
%%%%%%%%%%

\author{Przemys{\l}aw Gospodarczyk        \and
        Pawe{\l} Wo\'{z}ny
}

\institute{P. Gospodarczyk (Corresponding author) \at Institute of Computer Science, University of Wroc{\l}aw,
              ul.~F.~Joliot-Curie 15, 50-383 Wroc{\l}aw, Poland\\
              Fax: +48713757801\\
              \email pgo@ii.uni.wroc.pl
           \and P. Wo\'{z}ny \at Institute of Computer Science, University of Wroc{\l}aw,
                ul.~F.~Joliot-Curie 15, 50-383 Wroc{\l}aw, Poland\\
                \email Pawel.Wozny@cs.uni.wroc.pl
}

\date{\today}

\maketitle

\begin{abstract}
In the paper, we give methods of construction of dual bases for the B-spline basis and truncated power basis. Explicit formulas for the dual B-spline basis
are obtained using the Legendre-like orthogonal basis of the polynomial spline space presented in (Wei et al., Comput.-Aided Des.~45 (2013), 85--92) and a connection between orthogonal and dual bases of any space given in (Lewanowicz and Wo\'zny, J.~Approx.~Theory 138 (2006), 129--150). Construction of the dual truncated power basis is performed in two phases. We start with explicit formulas for the dual power basis of the space of polynomials. Then, we expand this basis using an iterative algorithm proposed in (Wo\'zny, J.~Comput.~Appl.~Math.~260 (2014), 301--311). As a result, we obtain the dual truncated power basis. We also present some applications of the proposed dual polynomial spline bases and illustrative examples.

\keywords{dual basis \and orthogonal basis \and B-spline basis \and truncated power basis \and Legendre polynomials \and least squares approximation}

\end{abstract}

%%%%%%%%%%%%%%%%%%%%%%%%%%%%%%%%%%%%%%%%%%%%%%%%%%%%%%%%%%%%%%%%%%%%%%%%%%%%%%
%%%%%%%%%%%%%%%%%%%%%%%%%%%%%%%%%%%%%%%%%%%%%%%%%%%%%%%%%%%%%%%%%%%%%%%%%%%%%%
\section{Introduction}\label{Sec:Intro}
%%%%%%%%%%%%%%%%%%%%%%%%%%%%%%%%%%%%%%%%%%%%%%%%%%%%%%%%%%%%%%%%%%%%%%%%%%%%%%
%%%%%%%%%%%%%%%%%%%%%%%%%%%%%%%%%%%%%%%%%%%%%%%%%%%%%%%%%%%%%%%%%%%%%%%%%%%%%%

Recently, \textit{dual bases} with their applications in \textit{numerical analysis} and \textit{computer graphics} have been
extensively studied. For example, there are many articles concerning the \textit{dual Bernstein polynomials} (see, e.g., \cite{Cie87,Jue98})
and their applications in several important algorithms associated with \textit{B\'ezier curves} (see, e.g., \cite{BJ07,GLW15,GLW17,LWK12,Liu09,WGL15,WL09}). Moreover, one can find some general results on dual bases (see \cite{GW15,Ker13,Woz13,Woz14}).
Nevertheless, the number of articles concerning \textit{dual polynomial spline bases} is still very limited.
In \cite{Woz14}, an iterative method of construction of the \textit{dual B-spline basis} was presented.

There are several important approximation problems concerning polynomial spline curves, e.g.,
\textit{degree reduction} (see, e.g.,~\cite{Hos87,LP01,PW09,PT95,WWF98,Woz14,Yong01}), \textit{merging} (see, e.g., \cite{CW10,THH03}) and
\textit{knots removal} (see, e.g., \cite{Eck95,LM87,Woz14}). In order to solve these problems using \textit{least squares approximation}, it is helpful to know the dual basis for the selected polynomial spline basis (see \cite[\S4]{Woz14}).

The first goal of the paper is to give a new method of construction of the dual B-spline basis.
In contrast to the \textit{iterative method} presented in \cite{Woz14}, our aim is to provide \textit{explicit formulas} for the dual B-spline functions.
Another difference is that the previous approach is based on a more general algorithm of construction of dual basis with respect to an arbitrary
inner product, whereas the new method is fully adjusted to the B-spline basis, and the $L_2$ inner product on the interval $[0,\,1]$ is selected.
The idea is to use a certain connection between orthogonal and dual bases of any space. As we shall see in \S\ref{Sec:PrelimDual}, a \textit{well-expressed orthogonal basis} of a polynomial spline space is needed. There are several papers dealing with the construction of orthogonal bases of various polynomial spline spaces (see, e.g.,~\cite{FHRS96,KTM88,MRS93,Red12,Sab88,Sch75,WGY13}). In contrast to the other methods, the one given in \cite{WGY13} provides a well-expressed orthogonal basis of the polynomial spline space defined over an arbitrary \textit{knot vector} (see \S\ref{Sec:PrelimSpln}). Consequently, it is used in our method of construction of the dual B-spline basis. Moreover, we have noted that in \cite{WGY13}, there are no formulas connecting the B-spline basis and the presented orthogonal basis. As it turns out, such formulas lead to the dual B-spline basis, and they are useful in solving least squares approximation problems concerning B-spline curves. For details, see \S\ref{Sec:DBSpl}.

The second goal of the paper is to construct the dual basis for the \textit{truncated power basis} of the polynomial spline space. The idea is to start with
the construction of the dual basis for the \textit{power basis} of the space of polynomials. We give explicit formulas for those \textit{dual power functions}. Then, in order to obtain the \textit{dual truncated power basis}, we can expand the dual power basis using the iterative algorithm from \cite{Woz14}. For details, see \S\ref{Sec:DTrunc}. According to the authors' knowledge, methods of construction of the dual power basis and dual truncated power basis have never been published before.

In \S\ref{Sec:App}, we explain the applications of the proposed dual polynomial spline bases in degree reduction and knots removal for spline curves. We also give some illustrative examples.

%%%%%%%%%%%%%%%%%%%%%%%%%%%%%%%%%%%%%%%%%%%%%%%%%%%%%%%%%%%%%%%%%%%%%%%%%%%%%%
%%%%%%%%%%%%%%%%%%%%%%%%%%%%%%%%%%%%%%%%%%%%%%%%%%%%%%%%%%%%%%%%%%%%%%%%%%%%%%
\section{Preliminaries}\label{Sec:Prelim}
%%%%%%%%%%%%%%%%%%%%%%%%%%%%%%%%%%%%%%%%%%%%%%%%%%%%%%%%%%%%%%%%%%%%%%%%%%%%%%
%%%%%%%%%%%%%%%%%%%%%%%%%%%%%%%%%%%%%%%%%%%%%%%%%%%%%%%%%%%%%%%%%%%%%%%%%%%%%%

%%%%%%%%%%%%%%%%%%%%%%%%%%%%%%%%%%%%%%%%%%%%%%%%%%%%%%%%%%%%%%%%%%%%%%%%%%%%%%
%%%%%%%%%%%%%%%%%%%%%%%%%%%%%%%%%%%%%%%%%%%%%%%%%%%%%%%%%%%%%%%%%%%%%%%%%%%%%%
\subsection{Dual and orthogonal bases}\label{Sec:PrelimDual}
%%%%%%%%%%%%%%%%%%%%%%%%%%%%%%%%%%%%%%%%%%%%%%%%%%%%%%%%%%%%%%%%%%%%%%%%%%%%%%
%%%%%%%%%%%%%%%%%%%%%%%%%%%%%%%%%%%%%%%%%%%%%%%%%%%%%%%%%%%%%%%%%%%%%%%%%%%%%%

Let $B_n := \left\{b_0,b_1,\ldots,b_{n}\right\}$ be a basis of the linear space $\mathcal{B}_n := \mbox{span}\,B_n$.  The dual basis
$D_n := \left\{d_0^{(n)},d_1^{(n)},\ldots,d_{n}^{(n)}\right\}$ for the basis $B_n$ of the space $\mathcal{B}_n$ satisfies the following
\textit{duality conditions}:
\begin{align*}
&\mbox{span}\,D_n=\mathcal{B}_n,\\[1ex]
&\hspace*{-1mm}\left<b_i,d_j^{(n)}\right>=\delta_{i,j}\qquad (i,j = 0,1,\ldots,n),
\end{align*}
where $\left<\cdot,\cdot\right>\,:\,\mathcal{B}_n\times \mathcal{B}_n\rightarrow\R$ is an inner product,
and $d_j^{(n)}$ $(j=0,1,\ldots,n)$ are called \textit{dual functions}. Here $\delta_{i,j}$, known as \textit{Kronecker delta}, equals $1$
if $i = j$, and $0$ otherwise.

Let $q_0,q_1,\ldots,q_n$ be an orthogonal basis of the space $\mathcal{B}_n$ with respect to the inner product $\left<\cdot,\cdot\right>$, i.e., the following conditions are satisfied:
\begin{align*}
&\mbox{span}\,\{q_0,q_1,\ldots,q_n\}=\mathcal{B}_n,\\[2ex]
&\hspace*{-1mm}\left<q_i,q_j\right>=h_i\delta_{i,j}\qquad (h_i > 0;\,i,j = 0,1,\ldots,n).
\end{align*}

Recall that orthogonal and dual bases of any space are related.
\begin{lemma}[\cite{LW06}]\label{L:Lew06}
Suppose that we know the representation of each $q_0,q_1,\ldots,q_n$ in the basis $b_0,b_1,\ldots,b_n$,
\begin{equation*}
q_i = \sum_{j=0}^{n}a_{i,j}b_j \qquad (i = 0,1,\ldots,n).
\end{equation*}
Then, the dual functions $d_0^{(n)},d_1^{(n)},\ldots,d_n^{(n)}$ can be written in the following way:
$$
d_j^{(n)} = \sum_{i=0}^{n}h_i^{-1}a_{i,j}q_i \qquad (j = 0,1,\ldots,n).
$$
\end{lemma}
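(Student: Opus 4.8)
The plan is to verify directly that the proposed functions $d_0^{(n)},d_1^{(n)},\ldots,d_n^{(n)}$ satisfy the two duality conditions. First I would note that each $d_j^{(n)}$ is by construction a linear combination of $q_0,q_1,\ldots,q_n$, so $\mathrm{span}\,D_n\subseteq\mathcal{B}_n$ is immediate; the reverse inclusion will follow automatically once the biorthogonality relations $\langle b_i,d_j^{(n)}\rangle=\delta_{i,j}$ are established, since these force the $d_j^{(n)}$ to be linearly independent (if $\sum_j c_j d_j^{(n)}=0$, pairing with $b_i$ yields $c_i=0$), giving $n+1$ independent vectors in the $(n+1)$-dimensional space $\mathcal{B}_n$. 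Hence the heart of the proof is the computation of $\langle b_i,d_j^{(n)}\rangle$.

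The key auxiliary quantity is the cross-Gram matrix $G_{i,k}:=\langle b_i,q_k\rangle$. I would obtain a usable expression for it by expanding the orthogonality relation $\langle q_m,q_k\rangle=h_k\delta_{m,k}$ in the $b$-basis on the first slot only:
\begin{equation*}
h_k\delta_{m,k}=\left\langle\sum_i a_{m,i}b_i,\,q_k\right\rangle=\sum_i a_{m,i}\,G_{i,k}.
\end{equation*}
Writing $A=(a_{m,i})$ and $H=\mathrm{diag}(h_0,h_1,\ldots,h_n)$, this says precisely that $AG=H$. Since both $\{b_j\}$ and $\{q_i\}$ are bases of $\mathcal{B}_n$, the change-of-basis matrix $A$ is invertible, so $G=A^{-1}H$, that is, $G_{i,k}=h_k(A^{-1})_{i,k}$.

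With this in hand the verification is a short computation: using the symmetry of the (real) inner product together with the definition of $d_j^{(n)}$,
\begin{equation*}
\langle b_i,d_j^{(n)}\rangle=\sum_{k=0}^{n}h_k^{-1}a_{k,j}\,G_{i,k}=\sum_{k=0}^{n}h_k^{-1}a_{k,j}\,h_k(A^{-1})_{i,k}=\sum_{k=0}^{n}(A^{-1})_{i,k}\,a_{k,j}=(A^{-1}A)_{i,j}=\delta_{i,j},
\end{equation*}
which is exactly the required duality condition.

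The step I expect to be the main (though mild) obstacle is the inversion argument turning $AG=H$ into $G=A^{-1}H$: it is what makes the factors $h_k$ cancel and lets the telescoping sum $\sum_k(A^{-1})_{i,k}a_{k,j}=\delta_{i,j}$ appear, so that the orthogonal-expansion data are converted into dual-basis data. Everything else is bookkeeping. I would close by invoking the biorthogonality just proved to supply the linear independence needed for $\mathrm{span}\,D_n=\mathcal{B}_n$, thereby completing both duality conditions and identifying the $d_j^{(n)}$ as the dual basis.
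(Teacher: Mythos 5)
Your verification is correct. Note that the paper itself supplies no proof of this lemma: it is imported from \cite{LW06}, so there is no in-source argument to compare against. Your route --- setting up the cross-Gram matrix $G_{i,k}:=\left<b_i,q_k\right>$, deriving $AG=H$ from orthogonality, inverting the change-of-basis matrix $A$ to get $G=A^{-1}H$, and reading off $\left<b_i,d_j^{(n)}\right>=(A^{-1}A)_{i,j}=\delta_{i,j}$ --- is a complete direct check, and your handling of the span condition (biorthogonality forces linear independence of $n+1$ elements in an $(n+1)$-dimensional space) closes the remaining duality requirement. A marginally lighter variant avoids writing $A^{-1}$ at all: from orthogonality one gets $\left<q_m,d_j^{(n)}\right>=a_{m,j}$ immediately, so for any $f=\sum_m c_m q_m$ the pairing $\left<f,d_j^{(n)}\right>=\sum_m c_m a_{m,j}$ returns exactly the coefficient of $b_j$ in the expansion of $f$, and taking $f=b_i$ gives $\delta_{i,j}$; this is the same cancellation you obtain, just without naming the inverse matrix. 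Either way the lemma is established.
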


Now, we recall the well-known fact about the use of dual and orthogonal bases in least squares approximation.
\begin{fact}[e.g., \cite{Woz13} and \protect{\cite[Theorem 4.5.13]{DB08}}]\label{T:DualLS}
The element
\begin{equation*}
p_n^\ast := \sum_{i=0}^n \left<f,d_i^{(n)}\right>b_i = \sum_{i=0}^n \frac{\left<f,q_i\right>}{\left<q_i,q_i\right>}q_i
\end{equation*}
is the best least squares approximation of a function $f$ in the space $\mathcal{B}_n$, i.e.,
$$
\|f-p_n^\ast\|=\min_{p_n\in\mathcal{B}_n}\|f-p_n\|,
$$
where $\|\cdot\|:=\sqrt{\left<\cdot,\cdot\right>}$ is the least squares norm.
\end{fact}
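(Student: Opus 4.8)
The plan is to treat the two assertions of the Fact separately: first to verify that the two displayed expressions for $p_n^\ast$ coincide, and then to establish that this common element minimizes $\|f-p_n\|$ over $\mathcal{B}_n$. For the equality of the two forms I would substitute the expansion of the dual functions supplied by Lemma~\ref{L:Lew06}. Writing $d_j^{(n)} = \sum_{i=0}^{n}h_i^{-1}a_{i,j}q_i$, bilinearity of the inner product gives $\left<f,d_j^{(n)}\right> = \sum_{i=0}^{n}h_i^{-1}a_{i,j}\left<f,q_i\right>$. Inserting this into $\sum_{j=0}^n \left<f,d_j^{(n)}\right>b_j$ and interchanging the order of summation, the inner sum $\sum_{j=0}^n a_{i,j}b_j$ collapses to $q_i$ by the defining relation $q_i = \sum_{j=0}^{n}a_{i,j}b_j$ of the orthogonal basis; since $h_i = \left<q_i,q_i\right>$, what remains is precisely the orthogonal-expansion form $\sum_{i=0}^{n}\left<f,q_i\right>\left<q_i,q_i\right>^{-1}q_i$.

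For the minimization property I would use the classical orthogonal-projection argument. The key step is to show that the residual $f-p_n^\ast$ is orthogonal to the whole space $\mathcal{B}_n$. Because $\{q_0,q_1,\ldots,q_n\}$ spans $\mathcal{B}_n$, it suffices to check $\left<f-p_n^\ast,q_k\right>=0$ for each $k$; expanding $p_n^\ast$ in its orthogonal form and using $\left<q_i,q_k\right>=h_i\delta_{i,k}$ makes all but one term of the sum vanish, leaving $\left<f,q_k\right>-\left<f,q_k\right>=0$. With this orthogonality in hand, for an arbitrary $p_n\in\mathcal{B}_n$ I would write $f-p_n=(f-p_n^\ast)+(p_n^\ast-p_n)$, observe that $p_n^\ast-p_n\in\mathcal{B}_n$ is orthogonal to $f-p_n^\ast$, and apply the Pythagorean identity to obtain $\|f-p_n\|^2=\|f-p_n^\ast\|^2+\|p_n^\ast-p_n\|^2\ge\|f-p_n^\ast\|^2$, with equality precisely when $p_n=p_n^\ast$.

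Since the statement is classical, I expect no genuine obstacle. The only point that uses the specific machinery of this paper is the first equality, which rests entirely on Lemma~\ref{L:Lew06}; the minimization half is just the finite-dimensional orthogonal-projection theorem and carries over verbatim to any inner-product space.
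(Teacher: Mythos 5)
Your argument is correct and is the standard one: the identification of the two forms of $p_n^\ast$ via Lemma~\ref{L:Lew06} and the orthogonal-projection/Pythagorean argument for optimality are exactly what the cited sources contain. The paper itself states this as a recalled Fact with references and supplies no proof, so there is nothing to diverge from; your write-up fills that gap correctly.
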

\noindent According to Fact \ref{T:DualLS}, the use of an orthogonal basis in the least squares approximation implies that the resulting $p_n^\ast$
will be written in this orthogonal basis. If a different representation for $p_n^\ast$ is required, then additional conversion
is necessary. Observe that an appropriate dual basis is more convenient in this regard because we get the resulting $p_n^\ast$ in the chosen basis,
and the additional conversion is not required. Moreover, the dual basis approach may lead to some algorithms having lower computational complexity
(see, e.g., \cite{GLW15,GLW17,WGL15,WL09}).

%%%%%%%%%%%%%%%%%%%%%%%%%%%%%%%%%%%%%%%%%%%%%%%%%%%%%%%%%%%%%%%%%%%%%%%%%%%%%%
%%%%%%%%%%%%%%%%%%%%%%%%%%%%%%%%%%%%%%%%%%%%%%%%%%%%%%%%%%%%%%%%%%%%%%%%%%%%%%
\subsection{Polynomial spline bases}\label{Sec:PrelimSpln}
%%%%%%%%%%%%%%%%%%%%%%%%%%%%%%%%%%%%%%%%%%%%%%%%%%%%%%%%%%%%%%%%%%%%%%%%%%%%%%
%%%%%%%%%%%%%%%%%%%%%%%%%%%%%%%%%%%%%%%%%%%%%%%%%%%%%%%%%%%%%%%%%%%%%%%%%%%%%%

Let $\Omega_n[T_m]$ be the space of \textit{polynomial spline curves} of degree at most $n$ defined over the knot vector
\begin{align}
T_m &:= \big(\underbrace{0,0,\ldots,0}_{n+1} < t_1 \le t_2 \le \cdots \le t_m <\underbrace{1,1,\ldots,1}_{n+1}\big)\label{Eq:KV}\\[0.5ex]
    &\equiv  \left(t_{-n}^{(m)}, t_{-n+1}^{(m)},\ldots, t_{n+m+1}^{(m)}\right),\nonumber
\end{align}
where $t_1,t_2,\ldots,t_m$ are called \textit{interior knots}. We assume that the multiplicity of an interior knot
does not exceed $n$. The (normalized) B-spline basis of the space $\Omega_n[T_m]$,
\begin{equation}\label{EQ:bas}
N_{-n,n}^m,N_{-n+1,n}^m,\ldots, N_{m,n}^m,
\end{equation}
can be defined recursively,
\begin{align*}
&N_{i,0}^m(t) := \left\{ \begin{array}{ll}
   1        \qquad &\left(t_{i}^{(m)} \leq t < t_{i+1}^{(m)}\right),\\[1ex]
   0        &\text{otherwise},
  \end{array}\right.\\[1ex]
&N_{i,n}^m(t) := \frac{t-t_i^{(m)}}{t_{i+n}^{(m)}-t_i^{(m)}}N_{i,n-1}^m(t)+\frac{t_{i+n+1}^{(m)}-t}{t_{i+n+1}^{(m)}-t_{i+1}^{(m)}}N_{i+1,n-1}^m(t);
\end{align*}
and explicitly,
$$
N_{i,n}^m(t) := \left(t_{i+n+1}^{(m)} - t_i^{(m)}\right)\left[t_i^{(m)},t_{i+1}^{(m)},\ldots, t_{i+n+1}^{(m)}\right](\cdot - t)^n_+,
$$
where
$$
(x - t)^n_+ := \left\{ \begin{array}{ll}\displaystyle (x-t)^n \qquad &(x > t),\\[1ex]
                                        \displaystyle  0 &(x \leq t).\end{array} \right.
$$
Here $\left[x_1,x_2,\ldots,x_k\right]f$ is a \textit{divided difference} (see, e.g., \cite[\S4.2.1]{DB08}).
At an interior knot $N_{i,n}^m(t)$ is $n-k$ times continuously differentiable, where $k$ is the multiplicity of the interior knot.
For some other useful properties of the B-spline basis, we recommend \cite[\S2]{PT97}.

In the paper, $\Pi_n$ denotes the space of all polynomials of degree at most $n$. Observe that $\Pi_n \equiv \Omega_n[T_0]$ on $[0,1]$, where
\begin{equation*}
T_0 := \big(\underbrace{0,0,\ldots,0}_{n+1},\underbrace{1,1,\ldots,1}_{n+1}\big).
\end{equation*}
The well-known power basis
\begin{equation}\label{EQ:Trunc1}
1,t,\ldots,t^n
\end{equation}
of the space $\Pi_n$, and the functions
\begin{equation*}
(t - t_q)^{n-h}_+ \qquad (h=0,1,\ldots,l_q-1)
\end{equation*}
for each interior knot $t_q$ of multiplicity $l_q$ $(1 \le l_q \le n)$ in $T_m$ form
the truncated power basis of the space $\Omega_n[T_m]$. For example, let us assume that there is only
one multiple interior knot $t_r$, i.e., we consider
$$
T'_m := \big(\underbrace{0,0,\ldots,0}_{n+1} < t_1 < t_2 < \cdots < t_r = t_{r+1} = \cdots = t_{r+l_r-1} < t_{r+l_r} < \cdots < t_m <\underbrace{1,1,\ldots,1}_{n+1}\big).
$$
Then, the functions \eqref{EQ:Trunc1},
\begin{align*}
&(t - t_j)^{n}_+ \qquad (j=1,2,\ldots,r-1,r+l_r,\ldots,m),\\
&(t - t_r)^{n-h}_+ \qquad (h=0,1,\ldots,l_r-1)
\end{align*}
form the truncated power basis of the space $\Omega_n[T'_m]$.

Further on in the paper, the inner product $\left<\cdot,\cdot\right>_L\,:\,\Omega_n[T_m]\times \Omega_n[T_m]\rightarrow\mathbb{R}$ is defined as follows:
\begin{equation}\label{EQ:innerp}
\langle f,g\rangle_L := \int_0^1 f(t)g(t) \dt.
\end{equation}

In \S\ref{Sec:DBSpl}, we will need a well-expressed orthogonal basis of the space $\Omega_n[T_m]$ with respect to the
inner product \eqref{EQ:innerp} (cf.~Lemma~\ref{L:Lew06}). Therefore, we recall the formulas given in \cite{WGY13}.
\begin{lemma}[\cite{WGY13}]\label{L:WGY13}
\emph{Legendre-like orthogonal basis} of the space $\Omega_n[T_m]$ with respect to the inner product \eqref{EQ:innerp}
is given by
\begin{align}
&L_{i,n}(t) := \frac{1}{i!}\frac{\der^i(1-t)^it^i}{\dt^i} \qquad (i=0,1,\ldots,n),\label{EQ:2}\\[1ex]
&L_{n+k,n}(t) := \frac{\der^{n+1}}{\dt^{n+1}}f_k(t) \qquad (k=1,2,\ldots,m),\label{EQ:3}
\end{align}
where
\begin{equation*}
f_k(t) := \sum_{j=0}^{k-1} \alpha_{-n+j,k}N_{-n+j,2n+1}^k(t)
\end{equation*}
with $\alpha_{-n+j,k}$ as defined in \cite[(2) and (3)]{WGY13}.
\end{lemma}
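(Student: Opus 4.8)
\section*{Proof proposal}

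The plan is to verify the three defining properties of an orthogonal basis: that the $n+1+m$ listed functions lie in $\Omega_n[T_m]$, that they are mutually orthogonal with respect to \eqref{EQ:innerp}, and that they are nonzero (hence, being orthogonal, linearly independent and --- by a dimension count --- a basis). The key structural observation I would exploit is that the knot vectors are nested, $\Pi_n = \Omega_n[T_0]\subset\Omega_n[T_1]\subset\cdots\subset\Omega_n[T_m]$, so that $L_{n+k,n}$ may be treated as the single new orthogonal direction produced when the knot $t_k$ is inserted.

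For the polynomial block I would first recognize the functions \eqref{EQ:2} as the shifted Legendre polynomials on $[0,1]$ written through their Rodrigues formula; indeed $L_{i,n}=(-1)^i\tilde P_i$ has degree exactly $i$, so $L_{i,n}\in\Pi_n\subseteq\Omega_n[T_m]$, and the classical identity $\langle L_{i,n},L_{j,n}\rangle_L=h_i\delta_{i,j}$ with $h_i=1/(2i+1)>0$ is immediate. This disposes of all inner products among \eqref{EQ:2}.

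The heart of the argument is the single claim that, for each $k$,
\begin{equation*}
\langle L_{n+k,n},g\rangle_L = 0 \qquad \text{for every } g\in\Omega_n[T_{k-1}].
\end{equation*}
Granting it, the lemma follows: since $L_{i,n}\in\Pi_n=\Omega_n[T_0]\subseteq\Omega_n[T_{k-1}]$ for $i\le n$ and $L_{n+l,n}\in\Omega_n[T_l]\subseteq\Omega_n[T_{k-1}]$ for $l<k$, every cross inner product with a strictly smaller index vanishes, and by symmetry of $\langle\cdot,\cdot\rangle_L$ the whole system is orthogonal; as $\dim\Omega_n[T_m]=n+1+m$ equals the number of functions, they form a basis once shown nonzero. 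To prove the claim I would write $L_{n+k,n}=f_k^{(n+1)}$ and integrate by parts $n+1$ times on each polynomial piece of $g$. First, $f_k$ is a spline of degree $2n+1$ whose interior-knot multiplicities are at most $n$, hence $f_k\in C^{n+1}$ and $L_{n+k,n}$ is a genuine continuous degree-$n$ spline in $\Omega_n[T_k]\subseteq\Omega_n[T_m]$, as required. Because $\deg g\le n$ on every knot interval, the bulk integrals $\int f_k\,g^{(n+1)}\dt$ vanish, so only boundary terms survive. The terms at $0$ and $1$ drop out provided $f_k^{(r)}(0)=f_k^{(r)}(1)=0$ for $r=0,1,\ldots,n$, while at an interior knot $t_q$ the continuity of $f_k$ and of $g,g',\ldots,g^{(n-1)}$ makes all contributions cancel except the one carrying the jump of $g^{(n)}$, leaving
\begin{equation*}
\langle L_{n+k,n},g\rangle_L = (-1)^{n+1}\sum_{q=1}^{k-1} f_k(t_q)\,\big[g^{(n)}\big]_{t_q},
\end{equation*}
where $[\cdot]_{t_q}$ denotes the jump across $t_q$. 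Since the jumps $[g^{(n)}]_{t_q}$ range freely as $g$ runs over $\Omega_n[T_{k-1}]$, the claim is equivalent to the interpolatory conditions $f_k(t_q)=0$ $(q=1,\ldots,k-1)$.

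What remains --- and what I expect to be the real obstacle --- is the bookkeeping specific to \cite{WGY13}: showing that the chosen B-splines $N^k_{-n+j,2n+1}$ already force the endpoint conditions $f_k^{(r)}(0)=f_k^{(r)}(1)=0$ $(r\le n)$ by virtue of the endpoint knot multiplicities and the index range $j=0,\ldots,k-1$, and that the coefficients $\alpha_{-n+j,k}$ from \cite[(2) and (3)]{WGY13} are exactly the essentially unique (up to scaling) solution of the $k-1$ homogeneous conditions $f_k(t_q)=0$ imposed on the $k$ free coefficients. This fixes $f_k$ up to a nonzero scalar, so $L_{n+k,n}\neq0$ and $h_{n+k}=\|L_{n+k,n}\|^2>0$, settling orthogonality and nonvanishing simultaneously. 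For multiple interior knots the same computation produces, at a knot of multiplicity $l_q$, the conditions $f_k(t_q)=f_k'(t_q)=\cdots=f_k^{(l_q-1)}(t_q)=0$, and the counts again match $\dim\Omega_n[T_m]=n+1+\sum_q l_q$; I would carry out the simple-knot case in full and merely indicate this extension.
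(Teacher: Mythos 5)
You should first note that the paper itself offers no proof of Lemma~\ref{L:WGY13}: it is quoted from \cite{WGY13}, so the only benchmark is the argument in that reference, not anything in this paper. That said, your overall strategy --- the nested spaces $\Pi_n=\Omega_n[T_0]\subset\cdots\subset\Omega_n[T_m]$, the classical orthogonality of the shifted Legendre polynomials for the block \eqref{EQ:2}, and the reduction of $\left<L_{n+k,n},g\right>_L=0$ for all $g\in\Omega_n[T_{k-1}]$ to the endpoint conditions $f_k^{(r)}(0)=f_k^{(r)}(1)=0$ $(r\le n)$ plus the interpolatory conditions $f_k(t_q)=0$, via $n+1$ piecewise integrations by parts --- is sound, and it is the standard route for such ``differentiate a degree-$(2n+1)$ spline $n+1$ times'' constructions. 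The dimension count and the observation that the endpoint conditions are forced automatically by the support and endpoint-multiplicity structure of the B-splines $N^k_{-n+j,2n+1}$ with $j=0,\dots,k-1$ are also correct.

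The genuine gap is the step you yourself flag and then defer: you never verify that the specific coefficients $\alpha_{-n+j,k}$ of \cite[(2) and (3)]{WGY13} actually satisfy $f_k(t_q)=0$ $(q=1,\dots,k-1)$. This is the only part of the argument that depends on what $\alpha_{-n+j,k}$ is; everything else is generic, so as written you have shown only that \emph{some} one-parameter family of coefficient vectors yields an orthogonal direction, not that the functions stated in the lemma do. Two further assertions also need justification rather than statement: (i) that the homogeneous $(k-1)\times k$ system $f_k(t_q)=0$ has rank exactly $k-1$, so that its solution is unique up to scaling and can be matched against the explicit $\alpha$'s --- this follows from the Schoenberg--Whitney theorem applied to a square collocation minor of $\bigl(N^k_{-n+j,2n+1}(t_q)\bigr)$, but it is not automatic; and (ii) that $L_{n+k,n}=f_k^{(n+1)}\neq 0$, which requires ruling out $f_k^{(n+1)}\equiv 0$; here one can argue that $f_k$ is at least $C^{n+1}$, so $f_k^{(n+1)}\equiv 0$ would make $f_k$ a single polynomial of degree at most $n$ vanishing to order $n+1$ at $t=0$, hence zero, contradicting the nontriviality of the coefficients. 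With the deferred verification of the interpolation conditions supplied from \cite{WGY13}, your argument would be complete.
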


\textit{Legendre polynomials} $P_i$ $(i=0,1,\ldots)$, which are a special case of \textit{Jacobi polynomials}, can be represented using the \textit{hypergeometric function} (see, e.g., \cite[p.~117]{AAR99}),
\begin{equation}\label{EQ:Leg}
P_i(x) = (-1)^i{}_2F_1\left({\textstyle -i,\,i+1 \atop \textstyle 1}\,\middle|\,\frac{1+x}{2}\right),
\end{equation}
where
\begin{equation}\label{Eq:hyper}
{}_2F_1\left({\textstyle -s,\,a \atop \textstyle b}\,\middle|\,t\right) := \sum_{k=0}^{s}\frac{(-s)_k(a)_k}{(b)_kk!}t^k \qquad (s \in \N)
\end{equation}
with
$$
(h)_0 := 1, \qquad (h)_k := h(h+1)\cdots(h+k-1).
$$
The functions
\begin{equation}\label{EQ:Ident}
L_{i,n}(t) \equiv (-1)^iP_i(2t-1) \qquad (i=0,1,\ldots,n)
\end{equation} are called \textit{shifted Legendre polynomials} (cf.~\eqref{EQ:2}).

%%%%%%%%%%%%%%%%%%%%%%%%%%%%%%%%%%%%%%%%%%%%%%%%%%%%%%%%%%%%%%%%%%%%%%%%%%%%%%
%%%%%%%%%%%%%%%%%%%%%%%%%%%%%%%%%%%%%%%%%%%%%%%%%%%%%%%%%%%%%%%%%%%%%%%%%%%%%%
\section{Dual B-spline basis}\label{Sec:DBSpl}
%%%%%%%%%%%%%%%%%%%%%%%%%%%%%%%%%%%%%%%%%%%%%%%%%%%%%%%%%%%%%%%%%%%%%%%%%%%%%%
%%%%%%%%%%%%%%%%%%%%%%%%%%%%%%%%%%%%%%%%%%%%%%%%%%%%%%%%%%%%%%%%%%%%%%%%%%%%%%

%%%%%%%%%%%%%%%%%%%%%%%%%%%%%%%%%%%%%%%%%%%%%%%%%%%%%%%%%%%%%%%%%%%%%%%%%%%%%%
%%%%%%%%%%%%%%%%%%%%%%%%%%%%%%%%%%%%%%%%%%%%%%%%%%%%%%%%%%%%%%%%%%%%%%%%%%%%%%
\subsection{Explicit formulas for the dual B-spline basis}\label{SubSec:ExpDBS}
%%%%%%%%%%%%%%%%%%%%%%%%%%%%%%%%%%%%%%%%%%%%%%%%%%%%%%%%%%%%%%%%%%%%%%%%%%%%%%
%%%%%%%%%%%%%%%%%%%%%%%%%%%%%%%%%%%%%%%%%%%%%%%%%%%%%%%%%%%%%%%%%%%%%%%%%%%%%%

In this section, our goal is to construct the dual basis for the B-spline basis \eqref{EQ:bas} of the space $\Omega_n[T_m]$ with respect to the
inner product \eqref{EQ:innerp}. The idea is to use Lemmas \ref{L:Lew06} and \ref{L:WGY13}. To do so, we will represent each $L_{j,n}$ $(j=0,1,\ldots,n+m)$
(see \eqref{EQ:2} and \eqref{EQ:3}) in the B-spline basis \eqref{EQ:bas} of the space $\Omega_n[T_{m}]$. Note that the given formulas are explicit.

\begin{lemma}\label{L:funL}
The shifted Legendre polynomials $L_{i,n}$ $(i=0,1,\ldots,n)$ (see \eqref{EQ:2}) can be written in terms of the B-spline functions
of degree $n$ over the knot vector $T_m$ (see \eqref{EQ:bas}),
\begin{align}
&L_{0,n}(t) = \sum_{j=0}^{n+m}N_{-n+j,n}^{m}(t),\label{EQ:L0}\\[1ex]
&L_{i,n}(t) = \sum_{j=0}^{n+m}\phi_{j,i}N_{-n+j,n}^{m}(t) \qquad (i=1,2,\ldots,n),\label{EQ:powerL}
\end{align}
where
\begin{equation}\label{Eq:polar}
\phi_{j,i} := \sum_{h=0}^{i}\frac{(-i)_h(i+1)_h}{\binom{n}{h}(h!)^2}\sum_{\substack{S \subset A_j \\ |S| = h}}\;\prod_{s \in S} t_s^{(m)}
\end{equation}
with $A_j := \left\{-n+j+1, -n+j+2,\ldots, j\right\}$.
\end{lemma}
\begin{proof}
First, it is well known that the B-spline functions form a \textit{partition of unity}, the formula \eqref{EQ:L0} is thus proven.
Next, we obtain
\begin{equation}\label{Eq:LPow}
L_{i,n}(t) = \sum_{h=0}^{i}(-i)_h(i+1)_h(h!)^{-2}t^h \qquad (i=1,2,\ldots,n)
\end{equation}
using \eqref{EQ:Ident}, \eqref{EQ:Leg} and \eqref{Eq:hyper}. The polynomials $L_{i,n}$ $(i=1,2,\ldots,n)$ are now written in the \textit{monomial form},
but we need to represent them in the basis \eqref{EQ:bas} of the space $\Omega_n[T_{m}]$. Such a conversion can be performed using the
\textit{polar forms} of $\binom{n}{h}t^h$ $(h=0,1,\ldots,n)$. This approach generalizes the \textit{Marsden's identity}
(see, e.g., \cite[\S5.8]{PBP02}). As a result, we obtain \eqref{EQ:powerL}.
\end{proof}

\begin{lemma}\label{L:deriv}
The functions $L_{n+k,n}$ $(k=1,2,\ldots,m)$ (see \eqref{EQ:3}) can be written in terms of the B-spline functions
of degree $n$ over the knot vector $T_m$ (see \eqref{EQ:bas}),
\begin{align}
&L_{n+k,n}(t) =  \sum_{i=0}^{n+m}\gamma_{i,k}N_{-n+i,n}^m(t) \qquad (k=1,2,\ldots,m-1),\label{EQ:Lnk}\\[1ex]
&L_{n+m,n}(t) =  \sum_{i=0}^{n+m}\beta_{i,m}N_{-n+i,n}^m(t),\label{EQ:Lnm}
\end{align}
where, for any $s_i \in \left[t_{-n+i}^{(m)},\,t_{i+1}^{(m)}\right)$,
\begin{align*}
& \gamma_{i,k} := \sum_{h=0}^{n+k}\beta_{h,k}\left(t^{(k)}_{h+1}-t^{(k)}_{-n+h}\right)\left[t^{(k)}_{-n+h},t^{(k)}_{-n+h+1},\ldots,t^{(k)}_{h+1}\right]g_{i},\\[1ex]
& g_{i}(y) := \left(y-s_i\right)_{+}^0\prod_{r=1}^{n}\left(y-t^{(m)}_{-n+i+r}\right),\\[1ex]
& \beta_{h,k} := \frac{(2n+1)!}{n!}\sum_{j=0}^{k-1}\alpha_{-n+j,k}\lambda_{n+1,h-j}^{(j,k)}
\end{align*}
with $\lambda_{0,0}^{(j,k)} := 1$ and
\begin{equation*}
\lambda_{v,q}^{(j,k)} := \frac{\lambda_{v-1,q}^{(j,k)} - \lambda_{v-1,q-1}^{(j,k)}}{t_{n+j+q-v+2}^{(k)} - t_{-n+j+q}^{(k)}} \qquad (v=1,2,\ldots,n+1;\; q=0,1,\ldots,v).
\end{equation*}
Here we assume that $\lambda_{u,w}^{(j,k)} =0$, if $w < 0$ or $w > u$. Moreover, if the denominator is $0$, then the corresponding quotient is defined to be $0$.
\end{lemma}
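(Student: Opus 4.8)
The plan is to prove the lemma in two stages: first I would express the derivative $L_{n+k,n}=\der^{n+1}f_k/\dt^{n+1}$ in the degree-$n$ B-spline basis over the knot vector $T_k$ (producing the coefficients $\beta_{h,k}$), and then, when $k<m$, convert from the basis of $\Omega_n[T_k]$ to the finer basis of $\Omega_n[T_m]$ (producing $\gamma_{i,k}$). Since $T_k$ and $T_m$ share the interior knots $t_1,\ldots,t_k$, we have $\Omega_n[T_k]\subseteq\Omega_n[T_m]$, so this conversion makes sense; for $k=m$ the second stage is vacuous and \eqref{EQ:Lnm} follows immediately from the first.

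For the first stage I would repeatedly apply the standard B-spline differentiation formula
\[
\frac{\der}{\dt}N_{i,p}^k(t)=p\left(\frac{N_{i,p-1}^k(t)}{t_{i+p}^{(k)}-t_i^{(k)}}-\frac{N_{i+1,p-1}^k(t)}{t_{i+p+1}^{(k)}-t_{i+1}^{(k)}}\right)
\]
to $N_{-n+j,2n+1}^k$, a total of $n+1$ times. Each differentiation contributes a factor equal to the current degree, so after $n+1$ steps the degrees $2n+1,2n,\ldots,n+1$ multiply to $\frac{(2n+1)!}{n!}$; meanwhile the ``difference over knot span'' operation, applied to the coefficient of $N_{-n+j+q,2n+1-v}^k$ at step $v$, produces exactly the recurrence defining $\lambda_{v,q}^{(j,k)}$. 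A short index computation confirms that the two knot spans $t_{i+p}^{(k)}-t_i^{(k)}$ and $t_{i+p+1}^{(k)}-t_{i+1}^{(k)}$ coming from consecutive terms coincide and both equal $t_{n+j+q-v+2}^{(k)}-t_{-n+j+q}^{(k)}$, the denominator in the stated recurrence. Summing over $j$ against the coefficients $\alpha_{-n+j,k}$ of $f_k$ and collecting the coefficient of $N_{-n+h,n}^k$ (with $h=j+q$) yields $\beta_{h,k}$ and hence $L_{n+k,n}=\sum_{h=0}^{n+k}\beta_{h,k}N_{-n+h,n}^k$.

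For the second stage I would extract the coefficient of each $N_{-n+i,n}^m$ using the de Boor--Fix dual functional $c_i$ associated with the basis \eqref{EQ:bas}, whose dual polynomial is $\psi_i(y):=\prod_{r=1}^n(y-t_{-n+i+r}^{(m)})$. The key computation is that applying $c_i$ (in the variable $t$) to the truncated power $(y-t)_+^n$ reproduces $g_i$: writing $c_i(f)=\frac{1}{n!}\sum_{r=0}^n(-1)^{n-r}\psi_i^{(r)}(s_i)\,D^{n-r}f(s_i)$ for $s_i$ in the support and using $D_t^{s}(y-t)_+^n=(-1)^{s}\frac{n!}{(n-s)!}(y-t)_+^{n-s}$, Taylor's formula collapses the sum to $\psi_i(y)$ when $y>s_i$ and to $0$ when $y<s_i$, that is to $g_i(y)=(y-s_i)_+^0\psi_i(y)$. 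The independence of the value on the choice of $s_i$ in the support is precisely the defining property of the de Boor--Fix functional. Finally, inserting the divided-difference representation $N_{-n+h,n}^k(t)=(t_{h+1}^{(k)}-t_{-n+h}^{(k)})[t_{-n+h}^{(k)},\ldots,t_{h+1}^{(k)}]_y(y-t)_+^n$ and commuting the linear functional $c_i$ with the divided difference in $y$ identifies the coefficient of $N_{-n+i,n}^m$ in $N_{-n+h,n}^k$ as $(t_{h+1}^{(k)}-t_{-n+h}^{(k)})[t_{-n+h}^{(k)},\ldots,t_{h+1}^{(k)}]g_i$; summing against $\beta_{h,k}$ produces $\gamma_{i,k}$ and establishes \eqref{EQ:Lnk}.

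The main obstacle is the bookkeeping in the first stage: one must track the shifting B-spline indices and degrees through $n+1$ differentiations to verify that the two denominators genuinely coincide and that the surviving range of $h$ is $0,\ldots,n+k$, consistent with the stated convention $\lambda_{u,w}^{(j,k)}=0$ for $w<0$ or $w>u$. The second stage is conceptually the more delicate one but technically short, provided one is willing to invoke the de Boor--Fix functional and its independence of the evaluation point $s_i$.
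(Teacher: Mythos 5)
Your proposal is correct and follows the same two-stage skeleton as the paper's proof: first write $L_{n+k,n}=\der^{n+1}f_k/\dt^{n+1}$ as $\sum_{h=0}^{n+k}\beta_{h,k}N_{-n+h,n}^k$, which settles \eqref{EQ:Lnm}, and then use $\Omega_n[T_k]\subset\Omega_n[T_m]$ to refine to the basis over $T_m$, which produces $\gamma_{i,k}$. The difference is one of self-containedness: where the paper simply cites the closed form of the $(n+1)$-st derivative of a B-spline (\cite[(2.10)]{PT97}) and the Oslo/discrete-B-spline knot-refinement formula of \cite{Coh80}, you derive both. Your induction on the one-step differentiation formula is sound --- the two denominators coming from consecutive terms indeed both equal $t_{n+j+q-v+2}^{(k)}-t_{-n+j+q}^{(k)}$, which is exactly why the stated recurrence for $\lambda_{v,q}^{(j,k)}$ has a single denominator --- and your de Boor--Fix computation showing $c_i\bigl((y-\cdot)_+^n\bigr)=(y-s_i)_+^0\prod_{r=1}^n\bigl(y-t^{(m)}_{-n+i+r}\bigr)=g_i(y)$, followed by commuting $c_i$ with the divided difference in $y$, is precisely a proof of the discrete B-spline formula that the paper imports as a black box. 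What your route buys is a verification of the exact index conventions in the statement (the range $h=0,\ldots,n+k$, the vanishing convention for $\lambda_{u,w}^{(j,k)}$, and the role of the interval $[t_{-n+i}^{(m)},t_{i+1}^{(m)})$ for $s_i$); what the paper's route buys is brevity. The only point deserving a little extra care in a write-up is the usual smoothness caveat for the de Boor--Fix functional applied to $(y-t)_+^n$ when $s_i$ collides with a knot of the divided difference, which the half-open choice of $s_i$ and the $(y-s_i)_+^0$ convention are there to absorb.
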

\begin{proof}
First, we substitute
$$
\frac{\der^{n+1}}{\dt^{n+1}}N_{-n+j,2n+1}^k(t) = \frac{(2n+1)!}{n!}\sum_{h=0}^{n+1}\lambda^{(j,k)}_{n+1,h}N_{-n+j+h,n}^k(t) \qquad (j=0,1,\ldots,k-1)
$$
(see, e.g., \cite[(2.10)]{PT97}) into \eqref{EQ:3}. Then, some simple manipulations lead to the formulas
\begin{equation*}
L_{n+k,n}(t) =  \sum_{h=0}^{n+k}\beta_{h,k}N_{-n+h,n}^k(t) \qquad (k=1,2,\ldots,m).
\end{equation*}
The functions $L_{n+k,n}$ $(k=1,2,\ldots,m)$  are now written in terms of the B-spline functions
of degree $n$ over the knot vector $T_k$ (cf.~\cite[Lemma 4.1]{WGY13}). The formula \eqref{EQ:Lnm} is thus proven. Next, taking into account that $\Omega_n[T_{k}] \subset \Omega_n[T_{m}]$ $(k=1,2,\ldots,m-1)$ (see, e.g., \cite[\S5.2]{PT97}), we note that each $L_{n+k,n} \in \Omega_n[T_{k}]$ $(k=1,2,\ldots,m-1)$ can be represented in the basis \eqref{EQ:bas} of the space $\Omega_n[T_{m}]$. Such a process is called \textit{knot refinement} (see, e.g., \cite[\S5.3]{PT97}), and it can be done by multiple application of \textit{knot insertion} (see, e.g., \cite[\S5.2]{PT97}). However, there are methods that specialize in knot refinement. An application of the one given in \cite{Coh80} results in the formulas \eqref{EQ:Lnk}.
\end{proof}

The final result, given in Theorem \ref{T:Main}, follows from Lemmas \ref{L:Lew06}, \ref{L:WGY13}, \ref{L:funL} and \ref{L:deriv}.

\begin{theorem}\label{T:Main}
The functions
\begin{equation}\label{Eq:dualBs}
D_{-n+j,n}^m(t) :=  \sum_{i=0}^{n+m}\varrho_{j,i}L_{i,n}(t) \qquad (j=0,1,\ldots,n+m),
\end{equation}
where
$$
\varrho_{j,i} := \left<L_{i,n},\,L_{i,n}\right>_L^{-1}\psi_{j,i}
$$
with
$$
\psi_{j,i} := \left\{ \begin{array}{ll}\displaystyle 1 \qquad &(i=0),\\[2ex]
                                        \displaystyle  \phi_{j,i} &(i=1,2,\ldots,n),\\[2ex]
                                        \displaystyle  \gamma_{j,i-n} &(i=n+1,n+2,\ldots,n+m-1),\\[2ex]
                                        \displaystyle  \beta_{j,m} &(i=n+m),\\[2ex]
                                        \end{array}\right.
$$
form the dual basis for the B-spline basis \eqref{EQ:bas} of the space $\Omega_n[T_m]$ with respect to the inner product \eqref{EQ:innerp}, i.e.,
$$
\left<N_{-n+i,n}^m,\,D_{-n+j,n}^m\right>_L = \delta_{i,j} \qquad (i,j=0,1,\ldots,n+m).
$$
\end{theorem}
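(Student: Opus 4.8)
The plan is to apply Lemma~\ref{L:Lew06} directly, taking the Legendre-like basis of Lemma~\ref{L:WGY13} as the orthogonal basis and the B-spline basis \eqref{EQ:bas} as the reference basis $b_0,b_1,\ldots,b_{n+m}$. First I would set $b_j := N_{-n+j,n}^m$ and $q_i := L_{i,n}$ for $i,j=0,1,\ldots,n+m$. By Lemma~\ref{L:WGY13} the functions $L_{0,n},L_{1,n},\ldots,L_{n+m,n}$ are orthogonal with respect to the inner product \eqref{EQ:innerp} and span $\Omega_n[T_m]$, so the hypotheses of Lemma~\ref{L:Lew06} are fulfilled with $h_i := \left<L_{i,n},L_{i,n}\right>_L > 0$.

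The crux is to read off the connection coefficients $a_{i,j}$ determined by $L_{i,n} = \sum_{j=0}^{n+m} a_{i,j}N_{-n+j,n}^m$ from Lemmas~\ref{L:funL} and~\ref{L:deriv}, and to check that they agree with the quantities $\psi_{j,i}$ in the statement. Concretely, \eqref{EQ:L0} gives $a_{0,j}=1$; \eqref{EQ:powerL} gives $a_{i,j}=\phi_{j,i}$ for $i=1,2,\ldots,n$; \eqref{EQ:Lnk} gives $a_{i,j}=\gamma_{j,i-n}$ for $i=n+1,n+2,\ldots,n+m-1$; and \eqref{EQ:Lnm} gives $a_{n+m,j}=\beta_{j,m}$. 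These four cases reproduce exactly the four branches of $\psi_{j,i}$, whence $a_{i,j}=\psi_{j,i}$.

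Having identified $a_{i,j}=\psi_{j,i}$ together with $h_i^{-1}=\left<L_{i,n},L_{i,n}\right>_L^{-1}$, Lemma~\ref{L:Lew06} yields the dual functions
$$d_j^{(n+m)} = \sum_{i=0}^{n+m} h_i^{-1}a_{i,j}q_i = \sum_{i=0}^{n+m} \left<L_{i,n},L_{i,n}\right>_L^{-1}\psi_{j,i}L_{i,n} = \sum_{i=0}^{n+m}\varrho_{j,i}L_{i,n} = D_{-n+j,n}^m,$$
which is precisely the asserted formula \eqref{Eq:dualBs}. Since $d_0^{(n+m)},\ldots,d_{n+m}^{(n+m)}$ constitute the dual basis of the B-spline basis by Lemma~\ref{L:Lew06}, the duality relations $\left<N_{-n+i,n}^m,D_{-n+j,n}^m\right>_L = \delta_{i,j}$ follow at once.

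I expect the only genuine work, and hence the main obstacle, to lie in the index bookkeeping of the second step: matching the shifted B-spline index $-n+j$ against the running index $j$ in the coefficient arrays, and verifying that the subscript conventions of $\phi_{j,i}$, $\gamma_{j,k}$, and $\beta_{j,m}$ in Lemmas~\ref{L:funL} and~\ref{L:deriv} line up with the argument order in $\psi_{j,i}$ (in particular the transposition of indices between $a_{i,j}$ and $\psi_{j,i}$). Everything beyond this is a direct substitution into Lemma~\ref{L:Lew06}, so no new estimate or computation is required beyond what the four cited lemmas already provide.
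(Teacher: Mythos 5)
Your proposal is correct and follows exactly the route the paper takes: the paper's entire justification of Theorem \ref{T:Main} is the one-line remark that it follows from Lemmas \ref{L:Lew06}, \ref{L:WGY13}, \ref{L:funL} and \ref{L:deriv}, which is precisely the substitution you carry out, including the correct identification $a_{i,j}=\psi_{j,i}$ and $h_i=\left<L_{i,n},L_{i,n}\right>_L$. Your explicit index check is in fact more detailed than what the paper provides.
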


%%%%%%%%%%%%%%%%%%%%%%%%%%%%%%%%%%%%%%%%%%%%%%%%%%%%%%%%%%%%%%%%%%%%%%%%%%%%%%
%%%%%%%%%%%%%%%%%%%%%%%%%%%%%%%%%%%%%%%%%%%%%%%%%%%%%%%%%%%%%%%%%%%%%%%%%%%%%%
\subsection{Additional remarks}\label{SubSec:DualRem}
%%%%%%%%%%%%%%%%%%%%%%%%%%%%%%%%%%%%%%%%%%%%%%%%%%%%%%%%%%%%%%%%%%%%%%%%%%%%%%
%%%%%%%%%%%%%%%%%%%%%%%%%%%%%%%%%%%%%%%%%%%%%%%%%%%%%%%%%%%%%%%%%%%%%%%%%%%%%%

\begin{remark}\label{R:polar}
Implementation of \eqref{Eq:polar} requires a method of computing
\begin{equation}\label{Eq:polar2}
r_{j,h} := \sum_{\substack{S \subset A_j \\ |S| = h}}\;\prod_{s \in S} t_s^{(m)} \qquad (j=0,1,\ldots,n+m;\;h=1,2,\ldots,n),
\end{equation}
where $A_j = \left\{-n+j+1, -n+j+2,\ldots, j\right\}$. Notice that the direct use of \eqref{Eq:polar2} is computationally expensive.
However, these quantities can be computed much more efficiently. Let us assume that $j$ is fixed.
According to \textit{Vieta's formulas}, we have
$r_{j,h} = (-1)^hw_{n-h,j}$ $(h=1,2,\ldots,n)$, where $w_{i,j}$ $(i=0,1,\ldots,n-1)$ are the coefficients of the monic polynomial
\begin{equation}\label{Eq:monic}
t^n + \sum_{i=0}^{n-1} w_{i,j}t^{i} = \prod_{i=1}^{n}\left(t-t^{(m)}_{-n+j+i}\right)
\end{equation}
(see, e.g., \cite[\S6.5.1]{DB08}). Now, given the right-hand side of \eqref{Eq:monic}, our goal is to compute $w_{i,j}$ $(i=0,1,\ldots,n-1)$. This can be done with the complexity $O(n^2)$ (see, e.g., \cite[p.~371]{DB08}).
\end{remark}

\begin{remark}\label{R:innerP}
Note that the formulas \eqref{Eq:dualBs} require a method of computing the inner products
$$
\left<L_{i,n},\,L_{i,n}\right>_L = \int_0^1 \left[L_{i,n}(t)\right]^2\dt \qquad (i = 0,1,\ldots,n+m).
$$
As is known,
\begin{equation}\label{Eq:InnerL}
\left<L_{i,n},\,L_{i,n}\right>_L = (2i+1)^{-1} \qquad (i = 0,1,\ldots,n)
\end{equation}
(see, e.g., \cite[p.~568]{WL09}). Moreover, since each $L_{i,n}$ $(i=n+1,n+2,\ldots,n+m)$ is now written in the basis \eqref{EQ:bas} (see Lemma \ref{L:deriv}),
it is sufficient to use a method of computing the inner products
\begin{equation}\label{Eq:InnerN}
\left<N_{-n+i,n}^m,\,N_{-n+j,n}^m\right>_L = \int_0^1 N_{-n+i,n}^m(t)N_{-n+j,n}^m(t)\dt \qquad (i,j = 0,1,\ldots,n+m).
\end{equation}
Such methods can be found, e.g., in \cite{BB06,Ver92}.
\end{remark}

\begin{remark}
According to Fact \ref{T:DualLS}, the use of the orthogonal basis \eqref{EQ:2}, \eqref{EQ:3} in solving least squares approximation problems concerning B-spline curves results in a spline curve which is written in that basis. However, the B-spline form is much more popular in CAD systems, therefore, the resulting spline curve should be converted (cf.~\cite[\S7]{WGY13}). In \cite{WGY13}, there are no formulas that directly represent the orthogonal basis \eqref{EQ:2}, \eqref{EQ:3} in the B-spline basis \eqref{EQ:bas}. Such formulas are necessary for the conversion. In our paper, we give these formulas (see \eqref{EQ:L0}, \eqref{EQ:powerL}, \eqref{EQ:Lnk} and \eqref{EQ:Lnm}). Furthermore, observe that any B-spline curve written in the basis \eqref{EQ:bas} can be represented in the basis \eqref{EQ:2}, \eqref{EQ:3} using the formulas
\begin{equation}\label{Eq:sigma}
N_{-n+j,n}^m(t) = \sum_{i=0}^{n+m}\frac{\left<N_{-n+j,n}^m,\,L_{i,n}\right>_L}{\left<L_{i,n},\,L_{i,n}\right>_L}L_{i,n}(t) \qquad (j=0,1,\ldots,n+m).
\end{equation}
In order to compute the inner products in \eqref{Eq:sigma}, it is sufficient to deal with the inner products \eqref{Eq:InnerN}
(for explanation, see Remark \ref{R:innerP}).
\end{remark}

%%%%%%%%%%%%%%%%%%%%%%%%%%%%%%%%%%%%%%%%%%%%%%%%%%%%%%%%%%%%%%%%%%%%%%%%%%%%%%
%%%%%%%%%%%%%%%%%%%%%%%%%%%%%%%%%%%%%%%%%%%%%%%%%%%%%%%%%%%%%%%%%%%%%%%%%%%%%%
\section{Dual truncated power basis}\label{Sec:DTrunc}
%%%%%%%%%%%%%%%%%%%%%%%%%%%%%%%%%%%%%%%%%%%%%%%%%%%%%%%%%%%%%%%%%%%%%%%%%%%%%%
%%%%%%%%%%%%%%%%%%%%%%%%%%%%%%%%%%%%%%%%%%%%%%%%%%%%%%%%%%%%%%%%%%%%%%%%%%%%%%

In this section, we construct the dual basis for the truncated power basis
$$
1,t,\ldots,t^n,(t-t_1)_+^n,(t-t_2)_+^n,\ldots,(t-t_m)_+^n
$$
of the space $\Omega_n[T''_m]$, where
\begin{equation*}\label{Eq:SimpleKV}
T''_m := \big(\underbrace{0,0,\ldots,0}_{n+1} < t_1 < t_2 < \cdots < t_m <\underbrace{1,1,\ldots,1}_{n+1}\big)
\end{equation*}
(cf.~\eqref{Eq:KV}), with respect to the inner product \eqref{EQ:innerp}. The procedure working for any $T_m$ \eqref{Eq:KV} is a simple generalization of the one given here. For the sake of clarity of the idea, it is omitted in the paper.

First, we use Lemma \ref{L:Lew06} and the shifted Legendre
polynomials \eqref{EQ:2} to find the dual basis for the power basis \eqref{EQ:Trunc1} of the space $\Pi_n$ with respect to the inner product \eqref{EQ:innerp} (see Lemma \ref{L:dualPower}). Then, in \S\ref{SubSec:DualTrunPow}, we obtain the dual truncated power basis by applying the iterative method from \cite{Woz14} (see also \cite{Woz13,GW15}) to the dual power basis.

\begin{lemma}\label{L:dualPower}
The polynomials
\begin{equation}\label{Eq:DualPow}
d_{j,n}(t) := \sum_{i=j}^n\Phi_{i,j}L_{i,n}(t) \qquad (j=0,1,\ldots,n),
\end{equation}
where
$$
\Phi_{i,j} := (2i+1)(-i)_j(i+1)_j(j!)^{-2},
$$
form the dual basis for the power basis \eqref{EQ:Trunc1} of the space $\Pi_n$ with respect to the inner product \eqref{EQ:innerp}, i.e.,
$$
\left<t^i,\,d_{j,n}\right>_L = \delta_{i,j} \qquad (i,j=0,1,\ldots,n).
$$
\end{lemma}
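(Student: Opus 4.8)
The plan is to verify the duality conditions directly, namely that $\left<t^i,\,d_{j,n}\right>_L = \delta_{i,j}$ for all $i,j \in \{0,1,\ldots,n\}$, while also noting that $d_{j,n} \in \Pi_n$ by construction (each is a linear combination of shifted Legendre polynomials $L_{0,n},\ldots,L_{n,n}$, which span $\Pi_n$). The key observation is that the whole construction should be an instance of Lemma \ref{L:Lew06} applied to the power basis $b_j = t^j$ and the orthogonal basis $q_i = L_{i,n}$. To use that lemma I need the expansion coefficients $a_{i,j}$ defined by $L_{i,n} = \sum_{j=0}^n a_{i,j} t^j$, and these are already available from equation \eqref{Eq:LPow} in the proof of Lemma \ref{L:funL}: namely $a_{i,j} = (-i)_j (i+1)_j (j!)^{-2}$ for $j \le i$ and $a_{i,j}=0$ for $j > i$. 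Combined with the norm values $h_i = \left<L_{i,n},\,L_{i,n}\right>_L = (2i+1)^{-1}$ from \eqref{Eq:InnerL}, Lemma \ref{L:Lew06} gives $d_{j,n} = \sum_{i=0}^n h_i^{-1} a_{i,j} L_{i,n} = \sum_{i=0}^n (2i+1)(-i)_j(i+1)_j(j!)^{-2} L_{i,n}$.

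The first step, then, is to record \eqref{Eq:LPow}, which identifies $a_{i,j}$ explicitly, and to observe that $a_{i,j}$ vanishes when $j>i$ because the summation in \eqref{Eq:LPow} runs only up to $h=i$. This vanishing is exactly what collapses the sum over $i$ in Lemma \ref{L:Lew06} from $i=0,\ldots,n$ down to $i=j,\ldots,n$, producing the stated lower summation limit in \eqref{Eq:DualPow}. The second step is to substitute the norm values $h_i^{-1} = 2i+1$ and match coefficients: setting $\Phi_{i,j} := h_i^{-1} a_{i,j} = (2i+1)(-i)_j(i+1)_j(j!)^{-2}$ reproduces the definition in the statement verbatim, so the formula \eqref{Eq:DualPow} is precisely the conclusion of Lemma \ref{L:Lew06}.

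Since Lemma \ref{L:Lew06} already guarantees that the resulting functions satisfy both duality conditions (spanning and biorthogonality), the proof is essentially complete once the coefficient identification is made. The only minor care needed is the bookkeeping on the summation index: I must confirm that replacing $\sum_{i=0}^n$ by $\sum_{i=j}^n$ is legitimate, i.e.\ that the dropped terms all have $a_{i,j}=0$. This is immediate from the Pochhammer factor $(-i)_j$, which contains the factor $(-i)(-i+1)\cdots(-i+j-1)$ and therefore equals zero whenever $0 \le i < j$ (one of the factors is then $0$). I do not anticipate a genuine obstacle here; the entire content is an application of the earlier lemma together with the two ingredients \eqref{Eq:LPow} and \eqref{Eq:InnerL}. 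If anything, the subtle point worth stating explicitly is just this vanishing of $(-i)_j$ for $i<j$, since it both justifies the index range and confirms that the $\Phi_{i,j}$ with $i<j$ need not appear.
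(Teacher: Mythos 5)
Your proposal is correct and follows exactly the paper's route: the paper's own proof is the one-line statement that the result follows from Lemma~\ref{L:Lew06}, \eqref{Eq:LPow} and \eqref{Eq:InnerL}, which is precisely the coefficient identification $a_{i,j}=(-i)_j(i+1)_j(j!)^{-2}$, $h_i^{-1}=2i+1$ that you carry out. Your explicit remark that $(-i)_j=0$ for $i<j$ (justifying the lower summation limit $i=j$) is a useful detail the paper leaves implicit.
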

\begin{proof}
The result follows from Lemma~\ref{L:Lew06}, \eqref{Eq:LPow} and \eqref{Eq:InnerL}.
\end{proof}

%%%%%%%%%%%%%%%%%%%%%%%%%%%%%%%%%%%%%%%%%%%%%%%%%%%%%%%%%%%%%%%%%%%%%%%%%%%%%%
%%%%%%%%%%%%%%%%%%%%%%%%%%%%%%%%%%%%%%%%%%%%%%%%%%%%%%%%%%%%%%%%%%%%%%%%%%%%%%
\subsection{Iterative construction of the dual truncated power basis}\label{SubSec:DualTrunPow}
%%%%%%%%%%%%%%%%%%%%%%%%%%%%%%%%%%%%%%%%%%%%%%%%%%%%%%%%%%%%%%%%%%%%%%%%%%%%%%
%%%%%%%%%%%%%%%%%%%%%%%%%%%%%%%%%%%%%%%%%%%%%%%%%%%%%%%%%%%%%%%%%%%%%%%%%%%%%%

Let there be given the dual basis $D_n = \left\{d_0^{(n)},d_1^{(n)},\ldots,d_{n}^{(n)}\right\}$ for the basis $B_n = \left\{b_0,b_1,\ldots,b_{n}\right\}$ of the linear space $\mathcal{B}_{n} = \mbox{span}\,B_n$ with respect to an inner product $\left<\cdot,\cdot\right>$. The dual basis
$D_{n+1} = \left\{d_0^{(n+1)},d_1^{(n+1)},\ldots,d_{n+1}^{(n+1)}\right\}$ for the basis
$B_{n+1} = B_{n} \cup \left\{b_{n+1}\right\}$ of the linear space $\mathcal{B}_{n+1} = \mbox{span}\,B_{n+1}$ with respect to the same inner product can be computed efficiently using the method given in \cite{Woz14}. Our idea is to set
$$
b_i := t^i, \quad d_i^{(n)} := d_{i,n} \qquad (i=0,1,\ldots,n)
$$
(see \eqref{Eq:DualPow}), $b_{n+1} := (t-t_1)_+^n$, $\left<\cdot,\cdot\right> := \left<\cdot,\cdot\right>_L$; and apply the method. Consequently, we obtain the dual basis $D_{n+1} = \left\{d_0^{(n+1)},d_1^{(n+1)},\ldots,d_{n+1}^{(n+1)}\right\}$ for the truncated power basis $B_{n+1} = \left\{1,t,\ldots,t^n,(t-t_1)_+^n\right\}$
of the space $\Omega_n[T_1]$ with respect to the inner product \eqref{EQ:innerp}. Repeated application of the method gives
the dual basis $D_{n+m} = \left\{d_0^{(n+m)},d_1^{(n+m)},\ldots,d_{n+m}^{(n+m)}\right\}$ for the truncated power basis
$B_{n+m} = \left\{1,t,\ldots,t^n,(t-t_1)_+^n,\right.$ $\left.(t-t_2)_+^n,\ldots,(t-t_m)_+^n\right\}$ of the space $\Omega_n[T''_m]$ with respect to
the inner product \eqref{EQ:innerp}. In \S\ref{SubSubSec:First} and \S\ref{SubSubSec:ith}, we give a more detailed description of the first and $i$th $(i > 1)$ iterations, respectively.

%%%%%%%%%%%%%%%%%%%%%%%%%%%%%%%%%%%%%%%%%%%%%%%%%%%%%%%%%%%%%%%%%%%%%%%%%%%%%%
%%%%%%%%%%%%%%%%%%%%%%%%%%%%%%%%%%%%%%%%%%%%%%%%%%%%%%%%%%%%%%%%%%%%%%%%%%%%%%
\subsubsection{First iteration of the algorithm}\label{SubSubSec:First}
%%%%%%%%%%%%%%%%%%%%%%%%%%%%%%%%%%%%%%%%%%%%%%%%%%%%%%%%%%%%%%%%%%%%%%%%%%%%%%
%%%%%%%%%%%%%%%%%%%%%%%%%%%%%%%%%%%%%%%%%%%%%%%%%%%%%%%%%%%%%%%%%%%%%%%%%%%%%%

Recall that in the first iteration, $B_{n+1} = \left\{1,t,\ldots,t^n,(t-t_1)_+^n\right\}$ and
$D_n = \left\{d_{0,n},d_{1,n},\ldots,d_{n,n}\right\}$ (see \eqref{Eq:DualPow}) are given. Our goal is to compute the dual basis $D_{n+1} = \left\{d_0^{(n+1)},d_1^{(n+1)},\ldots,d_{n+1}^{(n+1)}\right\}$ for the basis $B_{n+1}$ of the space $\Omega_n[T_1]$ with respect to the inner product \eqref{EQ:innerp}.

According to \cite[(2.7)]{Woz14}, we must deal with certain inner products. First, after some algebra,
we obtain
\begin{equation}
v_j^{(n+1)} := \left<t^j,\,(t-t_1)_+^n\right>_L %=\int_0^1 t^j(t-t_1)_+^n\dt = \int_{t_1}^1 t^j(t-t_1)^n\dt\notag\\[1ex]
            = \frac{(1-t_1)^{n+1}t_1^j}{n+1}{}_2F_1\left({\textstyle -j,\,n+1 \atop \textstyle n+2}\,\middle|\,\frac{t_1-1}{t_1}\right)
            \qquad (j=0,1,\ldots,n)\label{EQ:v1}
\end{equation}
(see \eqref{Eq:hyper}), and
\begin{equation}
v_{n+1}^{(n+1)} := \left<(t-t_1)_+^n,\,(t-t_1)_+^n\right>_L %=\int_0^1(t-t_1)_+^{2n}\dt = \int_{t_1}^1(t-t_1)^{2n}\dt\notag\\[1ex]
                = (2n+1)^{-1}(1-t_1)^{2n+1}.\label{EQ:v2}
\end{equation}
Next, using \eqref{Eq:DualPow} and \eqref{Eq:LPow}, we get
\begin{equation}
u_j^{(n+1)} := \left<(t-t_1)_+^n,\,d_{j,n}\right>_L %= \int_0^1 (t-t_1)_+^n\sum_{k=j}^n\Phi_{k,j}L_{k,n}(t)\dt\notag\\[1ex]
            %&= \sum_{k=j}^n\Phi_{k,j}\int_{t_1}^1(t-t_1)^nL_{k,n}(t)\dt\notag\\[1ex]
            = \sum_{k=j}^n\Phi_{k,j}\sum_{h=0}^k(-k)_h(k+1)_h(h!)^{-2}v_h^{(n+1)} \qquad (j=0,1,\ldots,n).\label{EQ:u}
\end{equation}
Finally, thanks to \cite[Algorithm 2.5]{Woz14}, we obtain the following representation of dual functions:
\begin{equation}\label{EQ:djn}
d_{j}^{(n+1)}(t) =  \sum_{k=0}^n \Psi_{k,j}^{(1)}L_{k,n}(t) + \Lambda_{j}^{(1)}(t-t_1)_+^n \qquad (j=0,1,\ldots,n+1),
\end{equation}
where
\begin{align*}
&\Psi_{k,j}^{(1)} := \Phi_{k,j}-u_j^{(n+1)}\Psi_{k,n+1}^{(1)} \qquad (j,k=0,1,\ldots,n),\\[1ex]
&\Psi_{k,n+1}^{(1)} := \sum_{l=0}^kc_l^{(n+1)}\Phi_{k,l} \qquad (k=0,1,\ldots,n),\\[1ex]
&\Lambda_{j}^{(1)} \equiv \Lambda_{1,j}^{(1)} := -u_j^{(n+1)}\Lambda_{n+1}^{(1)}\qquad (j=0,1,\ldots,n),\\[1ex]
&\Lambda_{n+1}^{(1)} \equiv \Lambda_{1,n+1}^{(1)} := c_{n+1}^{(n+1)}
\end{align*}
with
\begin{align}
&c_h^{(q)} := -v_h^{(q)}c_{q}^{(q)} \qquad (h=0,1,\ldots,q-1),\label{EQ:ccoeff1}\\[1ex]
&c_{q}^{(q)} := \left(v_{q}^{(q)} - \sum_{h=0}^{q-1}v_{h}^{(q)}u_{h}^{(q)}\right)^{-1}.\label{EQ:ccoeff2}
\end{align}
Here we ignore that $\Phi_{k,j} = 0$ $(k < j)$.

%%%%%%%%%%%%%%%%%%%%%%%%%%%%%%%%%%%%%%%%%%%%%%%%%%%%%%%%%%%%%%%%%%%%%%%%%%%%%%
%%%%%%%%%%%%%%%%%%%%%%%%%%%%%%%%%%%%%%%%%%%%%%%%%%%%%%%%%%%%%%%%%%%%%%%%%%%%%%
\subsubsection{$i$th $(i > 1)$ iteration of the algorithm}\label{SubSubSec:ith}
%%%%%%%%%%%%%%%%%%%%%%%%%%%%%%%%%%%%%%%%%%%%%%%%%%%%%%%%%%%%%%%%%%%%%%%%%%%%%%
%%%%%%%%%%%%%%%%%%%%%%%%%%%%%%%%%%%%%%%%%%%%%%%%%%%%%%%%%%%%%%%%%%%%%%%%%%%%%%

Let us assume that in the previous iteration of the algorithm we have computed
\begin{equation}\label{EQ:djn1}
d_{j}^{(n+i-1)}(t) = \sum_{k=0}^{n}\Psi_{k,j}^{(i-1)}L_{k,n}(t) + \sum_{h=1}^{i-1}\Lambda_{h,j}^{(i-1)}(t-t_h)_+^n \qquad (j=0,1,\ldots,n+i-1)
\end{equation}
(cf.~\eqref{EQ:djn}). Now, our goal is to determine the dual basis $D_{n+i} = \left\{d_0^{(n+i)},d_1^{(n+i)},\ldots,d_{n+i}^{(n+i)}\right\}$ for the basis
$B_{n+i}= \left\{1,t,\ldots,t^n,(t-t_1)_+^n,(t-t_2)_+^n,\ldots,(t-t_i)_+^n\right\}$ of the space $\Omega_n[T''_i]$ with respect to the inner product \eqref{EQ:innerp}.

As in the first iteration, we must deal with certain inner products. After some algebra, we get
\begin{equation}\label{EQ:vj}
v_j^{(n+i)} := \left<t^j,\,(t-t_i)_+^n\right>_L = \frac{(1-t_i)^{n+1}t_i^j}{n+1}{}_2F_1\left({\textstyle -j,\,n+1 \atop \textstyle n+2}\,\middle|\,\frac{t_i-1}{t_i}\right) \qquad (j=0,1,\ldots,n)
\end{equation}
(see \eqref{Eq:hyper}; cf.~\eqref{EQ:v1}) and
\begin{equation}
v_{n+j}^{(n+i)} := \left<(t-t_{j})_+^n,\,(t-t_i)_+^n\right>_L
            %&= \sum_{h=0}^{n}\binom{n}{h}(t_i-t_j)^h(1-t_i)^{2n-h+1}(2n-h+1)^{-1}\notag\\[1ex]
            = v_{n+i}^{(n+i)}{}_2F_1\left({\textstyle -n,\,-2n-1 \atop \textstyle -2n}\,\middle|\,\frac{t_i-t_j}{t_i-1}\right)
             \qquad (j=1,2,\ldots,i-1),\label{EQ:vj2}
\end{equation}
where
\begin{equation}\label{EQ:vj3}
v_{n+i}^{(n+i)} := \left<(t-t_i)_+^n,\,(t-t_i)_+^n\right>_L = (2n+1)^{-1}(1-t_i)^{2n+1}
\end{equation}
(cf.~\eqref{EQ:v2}). Moreover, we calculate
\begin{align}
u_j^{(n+i)} &:= \left<(t-t_i)_+^n,\,d_j^{(n+i-1)}\right>_L\notag\\[1ex]
            &= \sum_{k=0}^{n}\Psi_{k,j}^{(i-1)}\sum_{q=0}^k(-k)_q(k+1)_q(q!)^{-2}v_{q}^{(n+i)} + \sum_{h=1}^{i-1}\Lambda_{h,j}^{(i-1)}v_{n+h}^{(n+i)} \qquad (j=0,1,\ldots,n+i-1)\label{EQ:q2}
\end{align}
(cf.~\eqref{EQ:u}).
As in \S\ref{SubSubSec:First}, we obtain the following representation of dual functions:
\begin{equation}\label{EQ:djni}
d_{j}^{(n+i)}(t) =  \sum_{k=0}^n \Psi_{k,j}^{(i)}L_{k,n}(t) + \sum_{h=1}^{i}\Lambda_{h,j}^{(i)}(t-t_h)_+^n \qquad (j=0,1,\ldots,n+i),
\end{equation}
where
\begin{align*}
&\Psi_{k,j}^{(i)} := \Psi_{k,j}^{(i-1)}-u^{(n+i)}_j\Psi_{k,n+i}^{(i)} \qquad (j=0,1,\ldots,n+i-1;\;k=0,1,\ldots,n),\\[1ex]
&\Psi_{k,n+i}^{(i)} :=  \sum_{l=0}^{n+i-1}c_l^{(n+i)}\Psi_{k,l}^{(i-1)} \qquad (k=0,1,\ldots,n),\\[1ex]
&\Lambda_{h,j}^{(i)} := \Lambda_{h,j}^{(i-1)}-u^{(n+i)}_j\Lambda_{h,n+i}^{(i)} \qquad (j=0,1,\ldots,n+i-1;\;h=1,2,\ldots,i-1),\\[1ex]
&\Lambda_{i,j}^{(i)} := -u^{(n+i)}_j\Lambda_{i,n+i}^{(i)} \qquad (j=0,1,\ldots,n+i-1),\\[1ex]
&\Lambda_{h,n+i}^{(i)} :=  \sum_{l=0}^{n+i-1}c_l^{(n+i)}\Lambda_{h,l}^{(i-1)} \qquad (h=1,2,\ldots,i-1),\\[1ex]
&\Lambda_{i,n+i}^{(i)} := c_{n+i}^{(n+i)}
\end{align*}
with $c_{j}^{(n+i)}$ as defined in \eqref{EQ:ccoeff1} and \eqref{EQ:ccoeff2}.

%%%%%%%%%%%%%%%%%%%%%%%%%%%%%%%%%%%%%%%%%%%%%%%%%%%%%%%%%%%%%%%%%%%%%%%%%%%%%%
%%%%%%%%%%%%%%%%%%%%%%%%%%%%%%%%%%%%%%%%%%%%%%%%%%%%%%%%%%%%%%%%%%%%%%%%%%%%%%
\subsection{Additional remarks}
%%%%%%%%%%%%%%%%%%%%%%%%%%%%%%%%%%%%%%%%%%%%%%%%%%%%%%%%%%%%%%%%%%%%%%%%%%%%%%
%%%%%%%%%%%%%%%%%%%%%%%%%%%%%%%%%%%%%%%%%%%%%%%%%%%%%%%%%%%%%%%%%%%%%%%%%%%%%%

\begin{remark}\label{R:Clenshaw}
The well-known \textit{three-term recurrence relation} for the Legendre polynomials $P_{i}$ (see, e.g., \cite[(4.5.55)]{DB08})
implies the three-term recurrence relation for the shifted Legendre polynomials $L_{i,n}$,
\begin{align*}
&L_{0,n}(t) = 1, \quad  L_{1,n}(t) = 1-2t,\\[1ex]
&L_{i+1,n}(t) =  \frac{(2i+1)(1-2t)}{i+1}L_{i,n}(t)-\frac{i}{i+1}L_{i-1,n}(t) \qquad (i = 1,2,\ldots,n-1).
\end{align*}
As a result, the linear combinations of $L_{i,n}$ $(i = 0,1,\ldots,n)$ in \eqref{Eq:DualPow}, \eqref{EQ:djn}, \eqref{EQ:djn1} and \eqref{EQ:djni}
can be evaluated with the complexity $O(n)$ using the \textit{Clenshaw's algorithm} (see, e.g., \cite[Theorem 4.5.21]{DB08}).
\end{remark}

\begin{lemma}
For a fixed $i$, the quantities $v^{(n+i)}_j$ $(j=0,1,\ldots,n)$ can be computed recursively using
\begin{align}
&v^{(n+i)}_0 = (1-t_i)^{n+1}(n+1)^{-1}, \quad v^{(n+i)}_1 = t_iv^{(n+i)}_0 + (1-t_i)^{n+2}(n+2)^{-1},\label{EQ:vr1}\\[2ex]
&v^{(n+i)}_j = \frac{((a+2)j + n(a+1))t_i}{n+j+1}v^{(n+i)}_{j-1} - \frac{(a+1)(j-1)t_i^2}{n+j+1}v^{(n+i)}_{j-2}
\qquad (j = 2,3,\ldots,n),\label{EQ:vr2}
\end{align}
where $a = (1-t_i)/t_i$, with the complexity $O(n)$ (cf.~\eqref{EQ:v1} and \eqref{EQ:vj}).
\end{lemma}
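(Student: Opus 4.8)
The plan is to argue directly from the integral representation $v_j^{(n+i)} = \langle t^j,(t-t_i)_+^n\rangle_L = \int_{t_i}^1 t^j(t-t_i)^n\dt$, which is the integral underlying \eqref{EQ:vj}. Since the target recurrence \eqref{EQ:vr2} is homogeneous of order two, I will first derive a simpler \emph{inhomogeneous} order-one recurrence from two elementary identities, and then eliminate the inhomogeneous term by differencing two consecutive instances.

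First I would integrate by parts, taking $u=t^j$ and $\mathrm{d}v=(t-t_i)^n\dt$; the boundary term at $t=t_i$ vanishes, and after clearing the factor $n+1$ this gives
\begin{equation*}
(n+1)\,v_j^{(n+i)} = (1-t_i)^{n+1} - j\int_{t_i}^1 t^{j-1}(t-t_i)^{n+1}\dt.
\end{equation*}
Separately, the algebraic split $t^j=(t-t_i)t^{j-1}+t_i\,t^{j-1}$ inside the integral yields
\begin{equation*}
v_j^{(n+i)} = \int_{t_i}^1 t^{j-1}(t-t_i)^{n+1}\dt + t_i\,v_{j-1}^{(n+i)}.
\end{equation*}
Eliminating the common integral $\int_{t_i}^1 t^{j-1}(t-t_i)^{n+1}\dt$ between these two identities produces the inhomogeneous two-term recurrence
\begin{equation}\label{Eq:2termrec}
(n+j+1)\,v_j^{(n+i)} = (1-t_i)^{n+1} + j\,t_i\,v_{j-1}^{(n+i)}.
\end{equation}
The two base cases \eqref{EQ:vr1} fall out immediately: $v_0^{(n+i)}=\int_{t_i}^1(t-t_i)^n\dt=(1-t_i)^{n+1}/(n+1)$, while applying the split once at $j=1$ and using $\int_{t_i}^1(t-t_i)^{n+1}\dt=(1-t_i)^{n+2}/(n+2)$ gives $v_1^{(n+i)}=t_i v_0^{(n+i)}+(1-t_i)^{n+2}/(n+2)$.

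To reach the homogeneous form \eqref{EQ:vr2}, I would write \eqref{Eq:2termrec} for the indices $j$ and $j-1$ and subtract, which cancels the constant $(1-t_i)^{n+1}$ and leaves
\begin{equation*}
(n+j+1)\,v_j^{(n+i)} = \big((n+j)+j\,t_i\big)v_{j-1}^{(n+i)} - (j-1)\,t_i\,v_{j-2}^{(n+i)}.
\end{equation*}
The only remaining task is to match the coefficients: using the single identity $a\,t_i=1-t_i$ (with $a=(1-t_i)/t_i$), a one-line computation shows $(n+j)+j\,t_i=\big((a+2)j+n(a+1)\big)t_i$ and $(j-1)\,t_i=(a+1)(j-1)\,t_i^2$, which is precisely \eqref{EQ:vr2}. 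Each value then costs a fixed number of arithmetic operations, so all of $v_0^{(n+i)},\ldots,v_n^{(n+i)}$ are obtained in $O(n)$ operations.

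I do not expect a genuine obstacle here; the only nonroutine step is noticing that the natural computation yields an inhomogeneous two-term recurrence and that the stated homogeneous three-term form is recovered by differencing out the constant $(1-t_i)^{n+1}$, after which the coefficient simplification in terms of $a$ is mechanical. As an alternative that avoids integration by parts, one could instead invoke Gauss's contiguous relation for ${}_2F_1$ in the top parameter $-j$ applied to \eqref{EQ:vj}; translating that relation through the prefactor $(1-t_i)^{n+1}t_i^j/(n+1)$ leads to the same recurrence, but the integral argument is more self-contained.
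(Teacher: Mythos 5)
Your proof is correct, but it takes a genuinely different route from the paper. The paper's proof is a one-line appeal to the hypergeometric representation \eqref{EQ:vj} together with one of Gauss's contiguous relations for ${}_2F_1$ (citing (2.5.16) of Andrews--Askey--Roy); the recurrence \eqref{EQ:vr2} is read off from that identity after carrying the prefactor $(1-t_i)^{n+1}t_i^j/(n+1)$ through. You instead work directly with the integral $\int_{t_i}^1 t^j(t-t_i)^n\dt$, and your two elementary identities (integration by parts, and the split $t^j=(t-t_i)t^{j-1}+t_i t^{j-1}$) do combine, as you claim, into the inhomogeneous first-order recurrence $(n+j+1)\,v_j^{(n+i)}=(1-t_i)^{n+1}+j\,t_i\,v_{j-1}^{(n+i)}$; differencing consecutive instances kills the constant term and, since $(a+1)t_i=1$ and $(a+2)t_i=1+t_i$, the resulting coefficients match \eqref{EQ:vr2} exactly, and the base cases \eqref{EQ:vr1} check out. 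What each approach buys: the paper's argument is shorter and slots the result into a standard family of ${}_2F_1$ identities, but is only as verifiable as the cited contiguous relation; yours is self-contained, checkable line by line, and as a by-product produces the two-term inhomogeneous recurrence, which already computes $v_0^{(n+i)},\ldots,v_n^{(n+i)}$ in $O(n)$ operations with fewer multiplications than the three-term form --- arguably the more useful formula in an implementation.
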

\begin{proof}
Since $v^{(n+i)}_j$ $(j=0,1,\ldots,n)$ can be written using hypergeometric functions (see \eqref{EQ:v1} and \eqref{EQ:vj}), the recurrence relation \eqref{EQ:vr2} follows from one of the \textit{Gauss' contiguous relations} (see \cite[(2.5.16)]{AAR99}).
\end{proof}

\begin{remark}
In the case of an arbitrary knot vector $T_m$ \eqref{Eq:KV}, the iterative procedure from \cite{Woz14} can be applied as well. Integrals
can be computed using formulas similar to \eqref{EQ:vj}--\eqref{EQ:q2}, \eqref{EQ:vr1} and \eqref{EQ:vr2}.
Alternatively, one can use the well-known \textit{Gaussian quadratures} (see, e.g., \cite[\S5.3]{DB08}), and get exact results since for properly chosen subintervals, integrands are polynomials.
\end{remark}

%%%%%%%%%%%%%%%%%%%%%%%%%%%%%%%%%%%%%%%%%%%%%%%%%%%%%%%%%%%%%%%%%%%%%%%%%%%%%%
%%%%%%%%%%%%%%%%%%%%%%%%%%%%%%%%%%%%%%%%%%%%%%%%%%%%%%%%%%%%%%%%%%%%%%%%%%%%%%
\section{Applications}\label{Sec:App}
%%%%%%%%%%%%%%%%%%%%%%%%%%%%%%%%%%%%%%%%%%%%%%%%%%%%%%%%%%%%%%%%%%%%%%%%%%%%%%
%%%%%%%%%%%%%%%%%%%%%%%%%%%%%%%%%%%%%%%%%%%%%%%%%%%%%%%%%%%%%%%%%%%%%%%%%%%%%%

%%%%%%%%%%%%%%%%%%%%%%%%%%%%%%%%%%%%%%%%%%%%%%%%%%%%%%%%%%%%%%%%%%%%%%%%%%%%%%
%%%%%%%%%%%%%%%%%%%%%%%%%%%%%%%%%%%%%%%%%%%%%%%%%%%%%%%%%%%%%%%%%%%%%%%%%%%%%%
\subsection{Degree reduction and knots removal for spline curves}
%%%%%%%%%%%%%%%%%%%%%%%%%%%%%%%%%%%%%%%%%%%%%%%%%%%%%%%%%%%%%%%%%%%%%%%%%%%%%%
%%%%%%%%%%%%%%%%%%%%%%%%%%%%%%%%%%%%%%%%%%%%%%%%%%%%%%%%%%%%%%%%%%%%%%%%%%%%%%

In this section, we present some applications of the proposed dual polynomial spline bases. Let us consider a degree $n$ B-spline curve in $\R^d$
\begin{equation}\label{E:curve1}
C(t) \equiv (C_1(t),C_2(t),\ldots,C_d(t)) := \sum_{i=0}^{n+m}c_iN_{-n+i,n}^m(t) \qquad (0 \le t \le 1)
\end{equation}
associated with the knot vector $T_m$ (see \eqref{Eq:KV}), where $c_i := (c_{i,1},c_{i,2},\ldots,c_{i,d}) \in \R^d$
$(i=0,1,\ldots,n+m)$.

The \textit{problem of the optimal degree reduction} of the B-spline curve $C(t)$ with respect to the $L_2$ norm is to compute the degree $n^\ast$ $(n^\ast<n)$ B-spline curve in $\R^d$
\begin{equation*}
C^\ast(t) \equiv (C_1^\ast(t),C_2^\ast(t),\ldots,C_d^\ast(t)) := \sum_{i=0}^{n^\ast+m}c_i^\ast N_{-n^\ast+i,n^\ast}^m(t) \qquad (0 \le t \le 1),
\end{equation*}
where $c_i^\ast := (c_{i,1}^\ast,c_{i,2}^\ast,\ldots,c_{i,d}^\ast) \in \R^d$
$(i=0,1,\ldots,n^\ast+m)$,
which is associated with the knot vector having the same inner knots as in $T_m$, and minimizes the $L_2$ error,
\begin{equation*}
E_2 := \|C-C^\ast\|_{L} \equiv \sqrt{\int_0^1\|C(t)-C^\ast(t)\|_E^2 \dt},
\end{equation*}
where $\|\cdot\|_E$ is the \textit{Euclidean vector norm in $\R^d$}. Clearly, the solution can be derived in a componentwise way.
According to Fact \ref{T:DualLS}, the optimal control points can be computed using the formulas
\begin{equation}\label{E:optcon}
c_{i,j}^\ast := \left<C_j,\,D_{-n^\ast+i,n^\ast}^m\right>_L \qquad (i=0,1,\ldots,n^\ast+m;\;j=1,2,\ldots,d),
\end{equation}
where $D_{-n^\ast+i,n^\ast}^m$ $(i=0,1,\ldots,n^\ast+m)$ is the dual B-spline basis for the B-spline basis $N_{-n^\ast+i,n^\ast}^m$ $(i=0,1,\ldots,n^\ast+m)$ with respect to the inner product \eqref{EQ:innerp}. In the next subsection, we also give the maximum errors,
$$
E_{\infty} := \max_{t \in W_M} \|C(t)-C^\ast(t)\|_E \approx \max_{t \in [0,1]} \|C(t)-C^\ast(t)\|_E,
$$
where $W_M := \left\{0, 1/M, 2/M,\ldots, 1\right\}$ with $M := 500$.

The \textit{problem of the optimal knots removal} for the B-spline curve $C(t)$ with respect to the $L_2$ norm is to compute the B-spline curve of the same degree which is associated with the given knot vector
\begin{equation*}
U_{m^{\ast}} := \big(\underbrace{0,0,\ldots,0}_{n+1} < u_1 \le u_2 \le \cdots \le u_{m^{\ast}} <\underbrace{1,1,\ldots,1}_{n+1}\big),
\end{equation*}
where $U_{m^{\ast}} \subset T_m$, and minimizes the $L_2$ error. Clearly, we can use the dual B-spline basis to find the optimal B-spline curve, i.e., the optimal control points can be computed using Fact \ref{T:DualLS} (cf.~\eqref{E:optcon}). Note that it is also possible to combine degree reduction with knots removal, i.e., to compute
the optimal $n^\ast$ degree B-spline curve which is associated with the knot vector $U_{m^{\ast}}$. In this case, both operations are performed simultaneously, and the dual B-spline basis is useful once again thanks to Fact \ref{T:DualLS}.

Observe that if $C(t)$ is written in the truncated power basis, then degree reduction and knots removal can be performed in the same way but using the dual truncated power basis. Furthermore, it can be easily checked that we may use the dual truncated power basis to get the representation of $C(t)$ in the truncated power basis (cf.~Fact \ref{T:DualLS}),
\begin{equation*}
C_j(t) = \sum_{i=0}^{n+m}c_{i,j}N_{-n+i,n}^m(t) = \sum_{i=0}^{n+m}\left<C_j,\,d_{i,n}^m\right>_Lp_{i,n}^m \qquad (j=1,2,\ldots,d;\;0 \le t \le 1),
\end{equation*}
where $p_{i,n}^m$ and $d_{i,n}^m$ $(i=0,1,\ldots,n+m)$ are the truncated and dual truncated power bases, respectively, of the space $\Omega_n[T_m]$ with respect to the inner product \eqref{EQ:innerp}. A spline curve in such a form can be, e.g., easily integrated.

%%%%%%%%%%%%%%%%%%%%%%%%%%%%%%%%%%%%%%%%%%%%%%%%%%%%%%%%%%%%%%%%%%%%%%%%%%%%%%
%%%%%%%%%%%%%%%%%%%%%%%%%%%%%%%%%%%%%%%%%%%%%%%%%%%%%%%%%%%%%%%%%%%%%%%%%%%%%%
\subsection{Examples}
%%%%%%%%%%%%%%%%%%%%%%%%%%%%%%%%%%%%%%%%%%%%%%%%%%%%%%%%%%%%%%%%%%%%%%%%%%%%%%
%%%%%%%%%%%%%%%%%%%%%%%%%%%%%%%%%%%%%%%%%%%%%%%%%%%%%%%%%%%%%%%%%%%%%%%%%%%%%%

Now, let us look at some examples. The results have been obtained in Maple{\small \texttrademark}13 using $16$-digit arithmetic.

The B-spline planar curve ,,Pear'' of degree $5$ is associated with the following knot vector:
\begin{equation}\label{E:K19}
K := \left(\underbrace{0,0,\ldots,0,}_{6} \frac{1}{20}, \frac{2}{20},\ldots, \frac{19}{20}, \underbrace{1,1,\ldots,1}_{6}\right)
\end{equation}
(cf.~\eqref{Eq:KV}), and its control points are given in Table \ref{tab:table1} (cf.~\eqref{E:curve1}).
\begin{table}[H]
\captionsetup{margin=0pt, font={scriptsize}}
\ra{1.3}
\begin{center}
  \begin{tabular}{lllll}
    \hline
    $(0.385, 0.845)$ & $(0.325, 0.76)$ & $(0.305, 0.635)$ & $(0.275, 0.79)$ & $(0.295, 0.895)$\\
    $(0.025, 0.885)$ & $(0.035, 0.79)$ & $(0.11, 0.71)$ & $(0.405, 0.705)$ & $(0.2, 0.675)$\\
    $(0.1, 0.59)$ & $(0.185, 0.365)$ & $(0.01, 0.45)$ & $(0.01, 0.045)$ & $(0.13, 0.02)$\\
    $(0.4, 0.005)$ & $(0.625, 0.045)$ & $(0.67, 0.185)$ & $(0.655, 0.395)$ & $(0.47, 0.405)$\\
     $(0.535, 0.51)$ & $(0.47, 0.645)$ & $(0.39, 0.71)$ & $(0.285, 0.665)$ & $(0.395, 0.835)$\\
    \hline
  \end{tabular}
\end{center}
\caption{Control points of the original B-spline planar curve ,,Pear''.}
\label{tab:table1}
\end{table}

First, let us consider the problem of knots removal for the B-spline curve ,,Pear''. We have computed the optimal B-spline curves of degree $5$
that are associated with the knot vectors
\begin{equation}\label{E:K12}
K \setminus \left\{\frac{1}{20},\frac{4}{20},\frac{7}{20},\frac{10}{20},\frac{13}{20},\frac{16}{20},\frac{19}{20}\right\}
\end{equation}
 (errors: $E_2 = 1.08e{-}2$, $E_{\infty} = 2.95e{-}2$; see Figure \ref{figure1a}) and
 \begin{equation}\label{E:K15}
K \setminus \left\{\frac{4}{20},\frac{7}{20},\frac{13}{20},\frac{16}{20}\right\}
\end{equation}
(errors: $E_2 = 3.58e{-}3$, $E_{\infty} = 7.92e{-}3$; see Figure \ref{figure1b}).

\begin{figure}[H]
\captionsetup{margin=0pt, font={scriptsize}}
\begin{center}
\setlength{\tabcolsep}{0mm}
\begin{tabular}{c}
\subfloat[]{\label{figure1a}\includegraphics[width=0.446\textwidth]{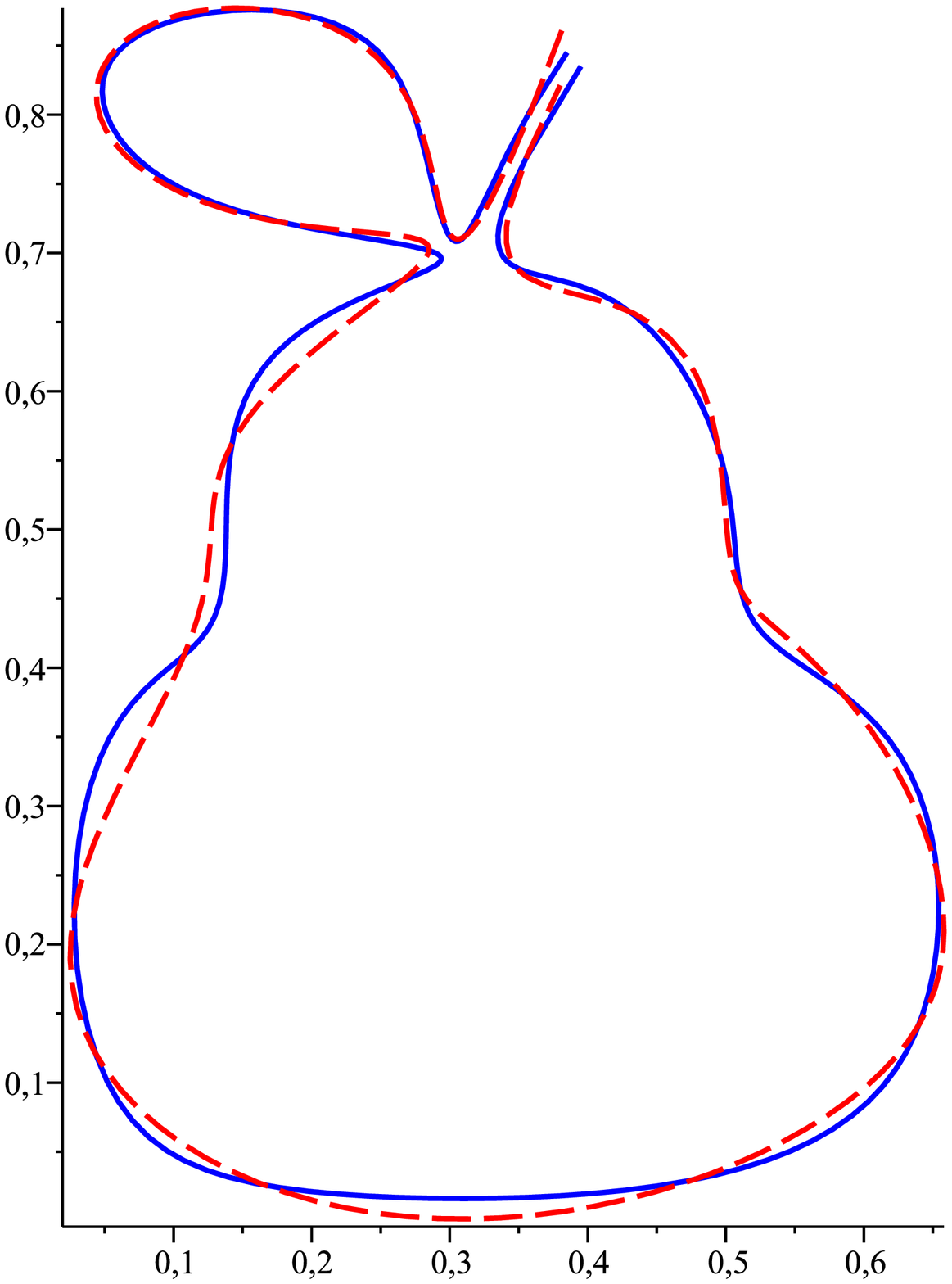}}
\subfloat[]{\label{figure1b}\includegraphics[width=0.446\textwidth]{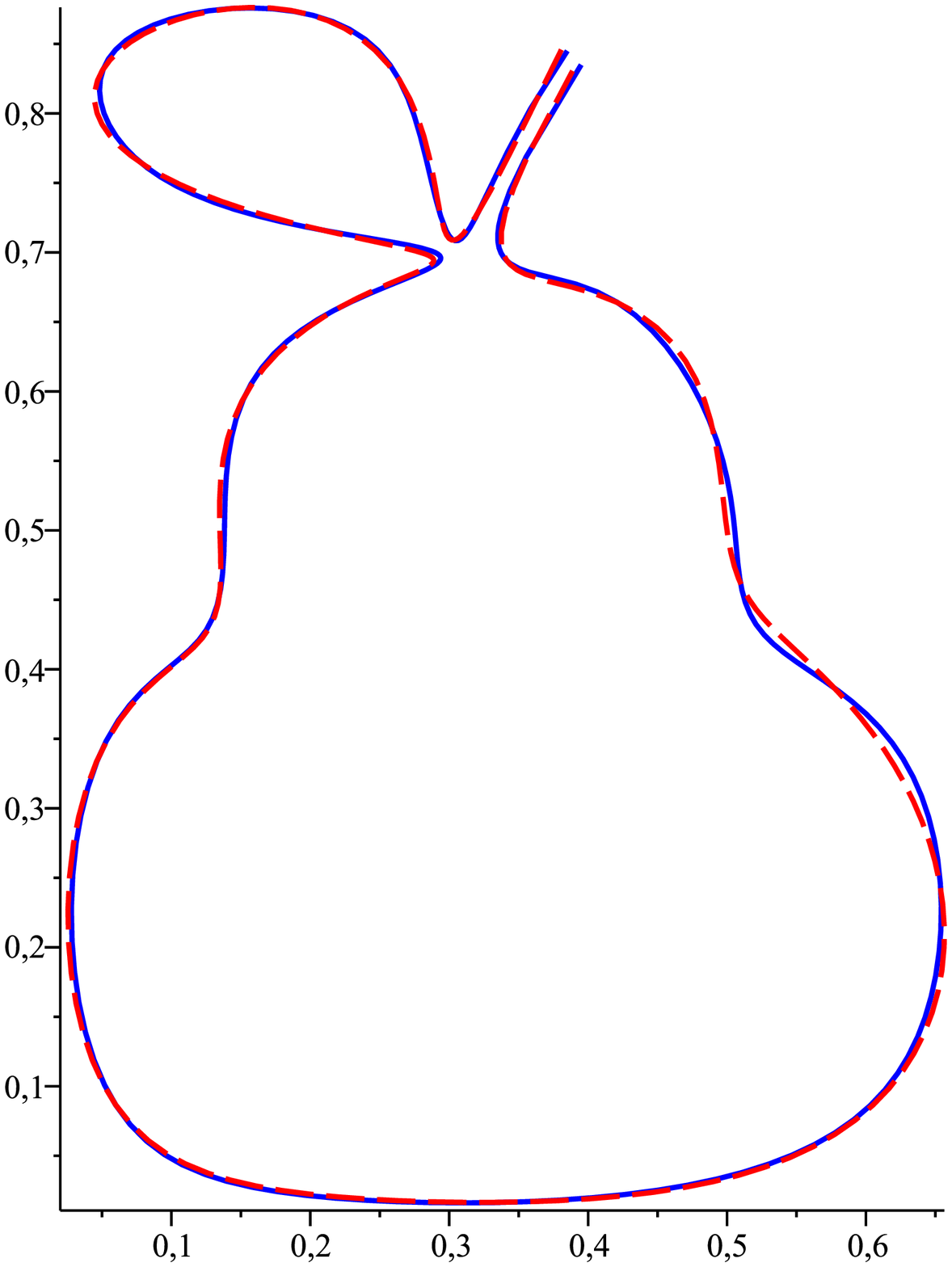}}
\end{tabular}
    \caption{The original B-spline curve ,,Pear'' of degree $5$ which is associated with the knot vector \eqref{E:K19} (blue solid line), the B-spline curves of degree $5$ which are associated with the knot vectors (a) \eqref{E:K12} and (b) \eqref{E:K15}, and computed in the process of knots removal
    (red dashed lines).}
\end{center}
\end{figure}

\newpage

Next, let us apply the procedure of degree reduction to the B-spline curve ,,Pear''. The resulting optimal B-spline curve of degree $3$
(errors: $E_2 = 2.76e{-}3$, $E_{\infty} = 3.41e{-}2$) is
shown in Figure \ref{figure2a}. The inner knots of this degree reduced curve are the same as in \eqref{E:K19}.
The optimal B-spline curve of degree $4$ (errors: $E_2 = 4.64e{-}3$, $E_{\infty} = 1.55e{-}2$) which is associated with the knot vector
\begin{equation}\label{E:K16}
K \setminus \left\{\frac{4}{20},\frac{13}{20},\frac{16}{20}\right\}
\end{equation}
is illustrated in Figure \ref{figure2b}. This curve is a result of simultaneous degree reduction and knots removal.

\begin{figure}[H]
\captionsetup{margin=0pt, font={scriptsize}}
\begin{center}
\setlength{\tabcolsep}{0mm}
\begin{tabular}{c}
\subfloat[]{\label{figure2a}\includegraphics[width=0.446\textwidth]{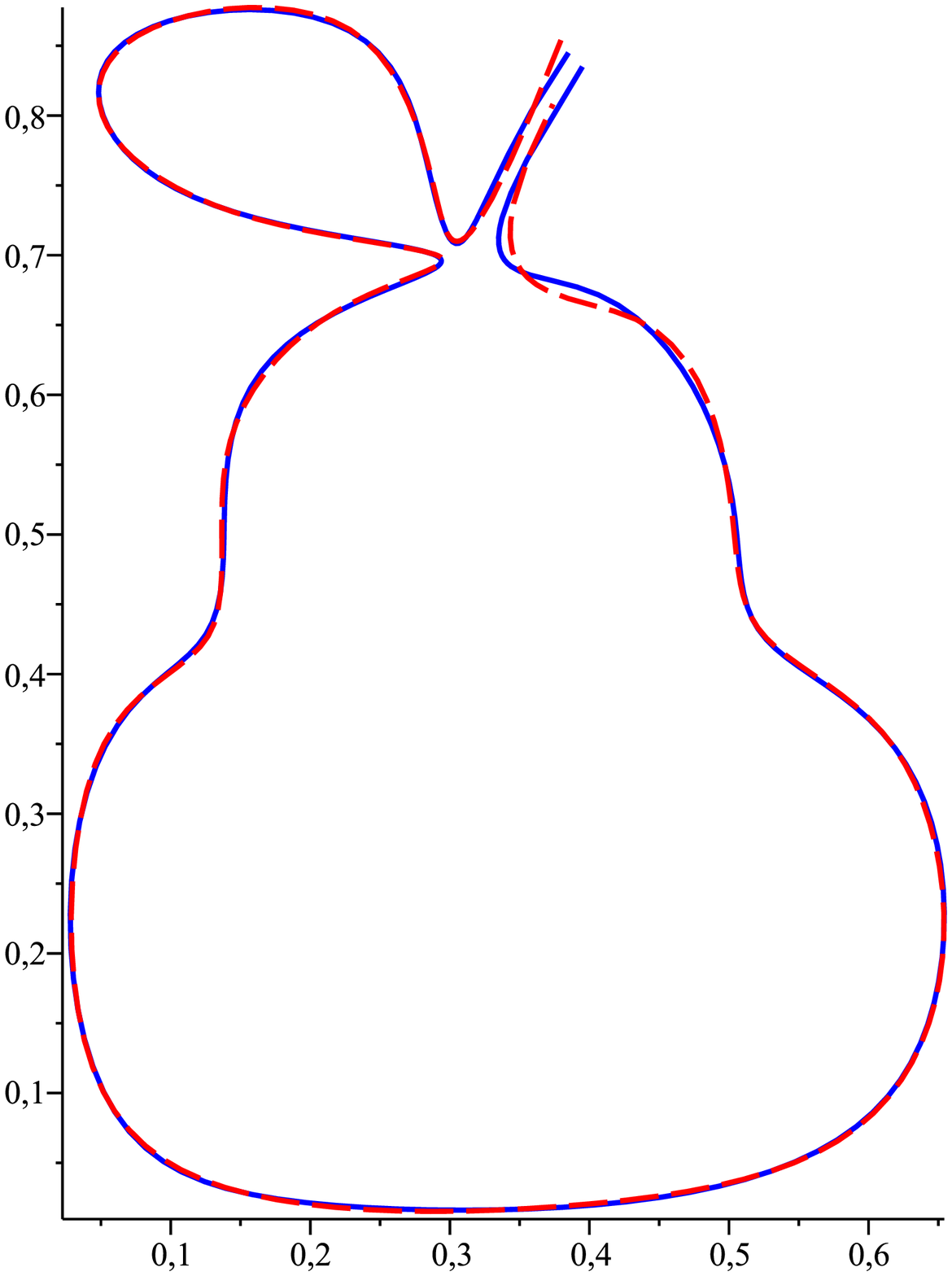}}
\subfloat[]{\label{figure2b}\includegraphics[width=0.446\textwidth]{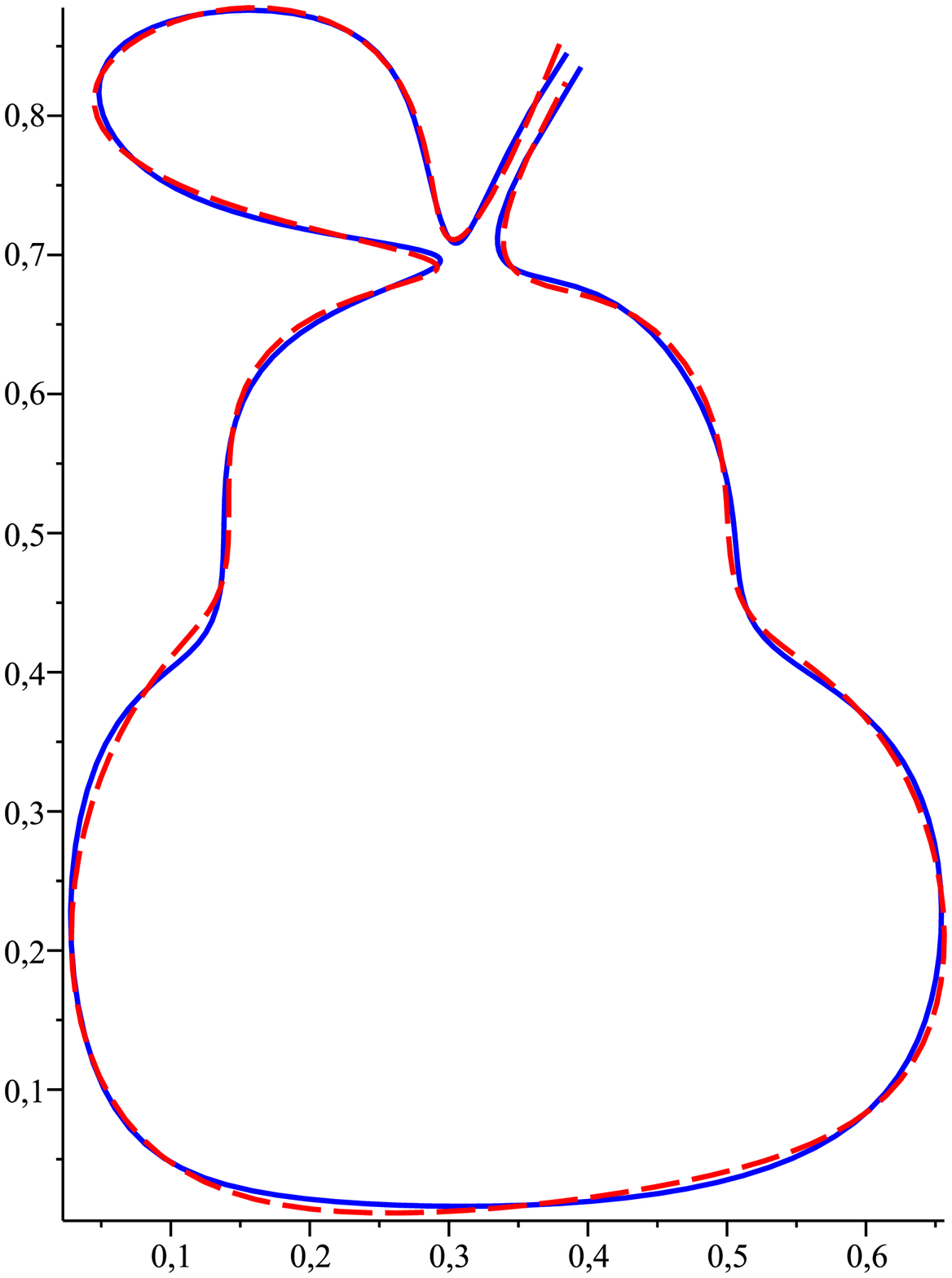}}
\end{tabular}
    \caption{The original B-spline curve ,,Pear'' of degree $5$ which is associated with the knot vector \eqref{E:K19} (blue solid line),
    (a) the B-spline curve of degree $3$ which is associated with the knot vector having the same inner knots as in \eqref{E:K19}, and
    computed in the process of degree reduction (red dashed line), (b) the B-spline curve of degree $4$ which is
    associated with the knot vector \eqref{E:K16}, and computed in the process of simultaneous degree reduction and knots removal
    (red dashed line).}
\end{center}
\end{figure}

\bibliographystyle{plain}

%%%%%%%%%%%%%%%%%%%%%%%%%%%%%%%%%%%%%%%%%%%%%%%%%%%%%%%%%%%%%%%%%%%%%%%%%%%%%%
%%%%%%%%%%%%%%%%%%%%%%%%%%%%%%%%%%%%%%%%%%%%%%%%%%%%%%%%%%%%%%%%%%%%%%%%%%%%%%

\end{document}